\numberwithin{equation}{section}
\newtheorem{theorem}{Theorem}[section]
\newtheorem{lemma}[theorem]{Lemma}
\newtheorem{corollary}[theorem]{Corollary}
\newtheorem{proposition}[theorem]{Proposition}
\newtheorem{definition}[theorem]{Definition}
\newtheorem{remark}{Remark}
\newsavebox{\measure@tikzpicture}
  \def\tikz@width{#1}%
  \def\tikzscale{1}\begin{lrbox}{\measure@tikzpicture}%
  \edef\tikzscale{\pgfmathresult}%
\title{Multivariate multifractal analysis of L\'evy functions. Part I: Determination of multifractal spectra}
\author[1]{St\'ephane Jaffard \thanks{stephane.jaffard@u-pec.fr}}
\author[2]{Lingmin Liao \thanks{lmliao@whu.edu.cn}}
\author[1]{Qian Zhang \thanks{qian.zhang@u-pec.fr}}
\affil[1]{{\small{Université Paris Est Créteil, Université Gustave Eiffel, CNRS, LAMA UMR8050, F-94010 Créteil, France}}}
\affil[2]{{\small{School of Mathematics and Statistics, Wuhan University, Wuhan, China}}}
\begin{document}
\maketitle
\begin{abstract}
We study the sets of points where a L\'evy function and a translated L\'evy function share a given couple of   H\"older exponents, and we investigate how  their Hausdorff dimensions   depend on the translation parameter.
\\{\textbf{Keywords}:} {Pointwise H\"older regularity, Multifractal analysis, multivariate multifractal spectrum, Hausdorff dimension, L\'evy function, Diophantine approximation}
\end{abstract}

\section{Introduction}

The multifractal analysis of a function $f$ consists of estimating the size of sets of points which share a same pointwise regularity exponent of $f$. 
To measure the pointwise regularity of a function, the H\"older exponent is usually used, see \cite{jaffard2004wavelet}; note, however, that other pointwise regularity exponents, such as the $p$-exponents,  and the weak scaling exponent, have also been considered, see  \cite{jaffard2005wavelet,abry2015bridge,abry2015irregularities,jaffard2016p,billat2023multifractal,leonarduzzi2017finite}. 

\begin{definition}\label{def}
Let $f : \mathbb{R^d}\rightarrow \mathbb{R}$ be a locally bounded function, $\alpha >0$, and $x_0\in \mathbb{R^d}$. The function  $f$ belongs to $C^{\alpha}(x_0)$ if there exist $C > 0$, $r >0$ and a polynomial $P$ satisfying $deg(P) < \alpha$ such that
\begin{equation}\label{Hol} \nonumber
\forall x \in[x_0-r,x_0+r],\ \lvert f(x)-P(x-x_0)\lvert  \leqslant C|x-x_0|^{\alpha}.
\end{equation}

The \textit{H\"older exponent} of $f$ at $x_0$ is 
\begin{equation} \nonumber
\begin{aligned}
h_f(x_0)=\sup\{\alpha \geqslant 0:\ f\in\ C^{\alpha}(x_0)\}.
\end{aligned}
\end{equation}
\end{definition}

Let  $E_f(H)$ be the set of points which share the same pointwise regularity exponent $H$:
\[ E_f(H) = \{ x: \quad h_f(x) =H\}. \]
In many mathematical examples, $E_f(H)$ is a fractal set.
The relevant information about  $f$ is not given directly by the pointwise regularity exponents, but it is encoded through the size of the level sets $E_f(H)$. To encapsulate such information, one determines the { \em multifractal spectrum}  of $f$ 
\begin{equation} \nonumber
\mathcal{D}_f:H\longmapsto \dim_H\big(E_f(H)\big),
\end{equation}
where $\dim_H$ stands for the Hausdorff dimension.
By convention, $\dim_H(\emptyset)=-\infty$,  and the support of the spectrum is the set of values $H$ such that $E_f(H)\neq \emptyset$.

The {\em multifractal formalism} is a numerically robust way of estimating the multifractal spectrum; it yields a collection of parameters which can be used in practice for classification or model selection. These notions have been widely applied in many fields and have reached great successes, see e.g.  \cite{jaffard2004wavelet,liao2014multifractal,abry2015irregularities,jaffard2016p,leonarduzzi2016p,fan2018multifractal}. However,  these achievements have been obtained by analyzing one function at a time; that is, it is implemented in a univariate environment. Nowadays,  many systems record a large number of data and several signals need to be processed simultaneously; these new needs call for a theoretical study of {\em multivariate multifractal analysis}  that would take into account the possible interrelationships between several functions, and would put into light key information concerning the correlations of their singularity sets. 
{\em Multivariate multifractal analysis}  is a simultaneous multifractal analysis of several pointwise regularity exponents derived from one or several functions. 

Let $(f_1,\cdots,f_d)$ be  $d$ functions (which may coincide or differ) to which are associated pointwise regularity exponents $(h_{1,f_1}, h_{2,f_2},\cdots, h_{d,f_d})$, given $H_1, H_2, \cdots, H_d >0$, one is  interested in the level sets
\begin{equation}
\begin{aligned}
&E_{f_1,f_2,\cdots,f_d}(H_1, H_2, \cdots, H_d) \\
:=&  \big\{x \in \mathbb{R}: h_{1,f_1}(x) = H_1, h_{2,f_2}(x) = H_2, \cdots, h_{d,f_d}(x) = H_d\big\}
\\=&\ \big\{x \in \mathbb{R}: h_{1,f_1}(x) = H_1\big\}\cap\big\{x \in \mathbb{R}: h_{2,f_2}(x) = H_2\big\}\cap\cdots \cap\big\{x \in \mathbb{R}: h_{d,f_d}(x) = H_d\big\}
\\=&\ \bigcap_{i=1}^dE_{f_i}(H_i). \label{multiset}
\end{aligned}
\end{equation}

\begin{definition}
The multivariate multifractal spectrum  of $(f_1,\cdots,f_d)$ is 
\begin{equation} \nonumber
\mathcal{D}_{f_1,f_2,\cdots,f_d} (H_1,H_2,\cdots,H_d)=  \dim_H\left(E_{f_1,f_2,\cdots,f_d}(H_1, H_2, \cdots, H_d)\right).\label{mspec}
\end{equation}
Its support  consists of the sets of exponents $(H_1,\cdots,H_d)$, such that $E_{f_1,\cdots,f_d}(H_1, \cdots, H_d) \neq \emptyset$.
\end{definition}

Multivariate multifractal analysis was first introduced in \cite{meneveau1990joint} as a natural extension of univariate analysis in order to analyze jointly several related quantities in hydrodynamic turbulence; in this article, a {\em multivariate multifractal formalism} was proposed, the purpose of which is to supply a numerically stable method in order to estimate the multivariate multifractal spectrum.     
The properties of univariate multifractal spectra have been extensively studied, but those of multivariate multifractal spectra have been the object of few mathematical investigations. J. Peyri\`ere \cite{peyriere2004vectorial} developed a formalism to perform the multifractal analysis of vector valued functions in a metric space. L. Barreira, B. Saussol, and J. Schmeling \cite{barreira2002higher} established a higher-dimensional version of multifractal analysis for several classes of hyperbolic dynamical systems. L. Olsen \cite{olsen2005mixed} investigated such spectra for several self-similar measures. A wavelet based formalism was proposed by M. Ben Slimane, A. Ben Mabrouk,  and J.  Aouidi in \cite{ben2016mixed} and its validity was investigated for specific functions;     in  \cite{ben2017mixed},  M. Ben Slimane, I. Ben Omrane, and B. Halouani proved the multifractal formalism holds for a residual (topologically large) subset of the product Besov space. 
 In  \cite{jaffard2019multifractal,jaffard2019multivariate},  
S. Jaffard, S. Seuret, H. Wendt, R. Leonarduzzi, S. Roux,
and P. Abry performed the joint multifractal analysis of a collection of signals unraveling the correlations between the locations of their pointwise singularities. They provided a framework which allows   to estimate the  multivariate multifractal spectrum and worked out some examples illustrating how the correlations between mathematical models are reflected in their multivariate multifractal spectrum. They also discussed the relationship between the multivariate multifractal spectrum and the corresponding Legendre spectrum.

According to \eqref{multiset}, obtaining a multivariate multifractal spectrum is equivalent to calculating the Hausdorff dimension of the intersection of level sets of regularity exponents.  In order to simplify notations, we now consider  two sets only. Many mathematical results have been derived concerning  the dimension of the intersection of sets and may provide relevant guidance for multivariate multifractal analysis, e.g. \cite{mattila1999geometry,falconer2004fractal}.
The first general result  follows from the classical rule for the intersections of subspaces: In $\mathbb{R}^d$, affine subspaces generically intersect according to the "sum of codimension rule":
\begin{equation}\label{qwe} \nonumber
\dim_H(A \cap B)= \dim_H(A) + \dim_H(B) - d
\end{equation}
(if the quantity on the right hand side is positive). 
P. Mattila \cite{mattila1985hausdorff} proved the following result (see also \cite[p.179]{mattila1999geometry}).

\begin{theorem}
    Let $O(n)$ denote the orthogonal group of $\mathbb{R}^d$ and $\theta_d$ be its Haar probability measure. Let $\tau_x$ be the translation defined by $\tau_x(y)=x+y$ for any $y\in\mathbb{R}$. For $d\geqslant 2$, and $s,\ t >0$ such that $s+t\geqslant d$, $t>(d+1)/2$, if $A$ is $\mathcal{H}^s$ measurable with $\mathcal{H}^s(A)<\infty$ and $B$ is $\mathcal{H}^t$ measurable with $\mathcal{H}^t(B)<\infty$, then for $\mathcal{H}^s\times \mathcal{H}^t\times \theta_d$ almost all $(x,y,g)\in A\times B\times O(n)$,
\begin{equation} \nonumber
	\dim_H\big(A\cap(\tau_x\circ g\circ\tau_{-y})B\big)\geqslant s+ t - d.
\end{equation}
\end{theorem}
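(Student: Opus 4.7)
The plan is to produce, for $\mathcal{H}^s \times \mathcal{H}^t \times \theta_d$-almost every $(x,y,g)$, a non-trivial positive Radon measure supported on $A \cap (\tau_x \circ g \circ \tau_{-y})B$ with finite $u$-Riesz energy for any $u$ with $u < s + t - d$; the standard energy-dimension criterion (converse direction of Frostman's lemma) then yields $\dim_H(A \cap (\tau_x \circ g \circ \tau_{-y})B) \geqslant u$, and letting $u \uparrow s + t - d$ completes the proof. To set things up, fix such a $u$ and choose $s' < s$, $t' < t$ with $t' > (d+1)/2$ and $u < s' + t' - d$; by Frostman's lemma, extract probability measures $\mu$ on $A$ and $\nu$ on $B$ with $\mu(B(z,r)) \leqslant C r^{s'}$ and $\nu(B(z,r)) \leqslant C r^{t'}$. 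Then $\mu$ has finite $s'$-energy and $\nu$ has finite $t'$-energy, equivalently $\int |\hat{\mu}(\xi)|^2 |\xi|^{s'-d} \, d\xi < \infty$ and $\int |\hat{\nu}(\xi)|^2 |\xi|^{t'-d} \, d\xi < \infty$ by Plancherel.

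For each triple $(x,y,g)$ write $T = \tau_x \circ g \circ \tau_{-y}$ and construct the intersection measure $\lambda_{x,y,g}$ as the weak-$*$ limit, as $\varepsilon \to 0$, of the measures with Lebesgue density $(\mu \ast \psi_\varepsilon)(z) \cdot (T_{*}\nu \ast \psi_\varepsilon)(z)$, where $\psi_\varepsilon$ is a standard approximate identity. Classical slicing arguments show that, for almost every triple, $\lambda_{x,y,g}$ is a non-trivial positive measure supported on $A \cap T(B)$; since $\mu, \nu$ are equivalent, after normalization, to bounded multiples of $\mathcal{H}^s|_A, \mathcal{H}^t|_B$, the almost-sure statements for $\mu \times \nu \times \theta_d$ and for $\mathcal{H}^s \times \mathcal{H}^t \times \theta_d$ coincide. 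It therefore suffices to prove that
\begin{equation*}
\mathcal{I} := \int_{O(d)} \int_A \int_B \iint \frac{d\lambda_{x,y,g}(z) \, d\lambda_{x,y,g}(w)}{|z - w|^u} \, d\mathcal{H}^t(y) \, d\mathcal{H}^s(x) \, d\theta_d(g) < \infty.
\end{equation*}

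Writing $|z-w|^{-u}$ via its Fourier representation (it is, up to a constant, the Fourier transform of $|\xi|^{u-d}$) and applying Plancherel, Fubini, and direct computation of the Fourier transforms of the translated and rotated measures, the integrations in $x$ and $y$ produce factors $|\hat{\mu}(\xi)|^2$ and $|\hat{\nu}(g^{-1}\xi)|^2$, yielding
\begin{equation*}
\mathcal{I} \; \lesssim \; \int_{\mathbb{R}^d} |\xi|^{u-d} |\hat{\mu}(\xi)|^2 \left( \int_{O(d)} |\hat{\nu}(g^{-1}\xi)|^2 \, d\theta_d(g) \right) d\xi.
\end{equation*}
The inner orthogonal-group integral depends only on $|\xi|$ and equals the spherical average $\sigma_\nu(|\xi|) := \int_{S^{d-1}} |\hat{\nu}(|\xi|\omega)|^2 \, d\sigma(\omega)$. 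Mattila's spherical-averaging inequality, valid precisely when $t' > (d+1)/2$, furnishes a bound of the form $\sigma_\nu(R) \lesssim R^{-t'}$ in an averaged sense that captures $I_{t'}(\nu) < \infty$. Substituting this bound and invoking the finite $s'$-energy of $\mu$ yields $\mathcal{I} \lesssim \int |\hat{\mu}(\xi)|^2 |\xi|^{u - d - t'} \, d\xi < \infty$ because $u < s' + t' - d$, completing the energy estimate.

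The main obstacle is Mattila's spherical-averaging estimate itself: it is both the source of the hypothesis $t > (d+1)/2$ and a genuinely non-trivial input, resting on stationary-phase analysis of the Fourier transform of the surface measure on $S^{d-1}$ combined with a dyadic decomposition of $\nu$. A secondary technical point, handled along standard lines, is the justification that the regularized product measures actually converge weakly to a non-trivial measure concentrated on $A \cap T(B)$ rather than merely on an $\varepsilon$-neighborhood of it; this is the content of the classical Marstrand-type slicing theorem and ensures that the $u$-energy bound above genuinely controls the Hausdorff dimension of the intersection.
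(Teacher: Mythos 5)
This statement is not proved in the paper at all: it is Mattila's intersection theorem, quoted verbatim as background (with references to \cite{mattila1985hausdorff} and \cite[p.179]{mattila1999geometry}) and used only to motivate the codimension formula \eqref{cod}. There is therefore no internal proof to compare against; what you have written is a sketch of Mattila's own Fourier--energy argument, and in outline it is the right argument: Frostman measures, intersection measures built as weak limits of mollified products, the energy--dimension criterion, and the spherical decay estimate that is the source of the hypothesis $t>(d+1)/2$.

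As a proof, though, the sketch outsources every substantive step to a ``classical'' citation, and two of the places where you are quickest are genuinely delicate. First, the reduction from $\mu\times\nu\times\theta_d$-a.e.\ to $\mathcal{H}^s\times\mathcal{H}^t\times\theta_d$-a.e.\ does not follow from $\mu,\nu$ being ``equivalent to bounded multiples'' of the restricted Hausdorff measures --- Frostman's lemma only produces a measure on a compact subset, and in general no such equivalence holds; one must instead exhaust $A$ and $B$ (up to null sets) by countably many compact pieces on which the restricted Hausdorff measures themselves satisfy the growth bound (using the a.e.\ upper density bound for sets of finite $\mathcal{H}^s$-measure) and run the argument on each piece. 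Second, the assertion that integrating over $x$ and $y$ ``produces factors $|\hat{\mu}(\xi)|^2$ and $|\hat{\nu}(g^{-1}\xi)|^2$'' hides the actual computation: $\widehat{\lambda_{x,y,g}}$ is a convolution of $\hat\mu$ with the transform of the translated--rotated $\nu$, the squares only appear because the energy is a bilinear expression in $\lambda_{x,y,g}$, and the decoupling of the $x$- and $y$-integrations (together with the verification that $\lambda_{x,y,g}$ is \emph{nonzero} for a positive-measure set of parameters, which is where $s+t\geqslant d$ is used) occupies most of Mattila's proof. If you intend this as a self-contained proof rather than a pointer to the literature, those two steps and the precise form of the spherical-averaging lemma need to be written out.
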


 Thus, a first possible  generic formulation for bivariate multifractal spectrum that can be expected is
\begin{equation} \label{cod}
D(H_1, H_2) = D(H_1) + D(H_2) - d.
\end{equation}
However, large classes of fractal sets satisfy the {\em large intersection property}, i.e. 
\begin{equation} \label{formulemin} 
\dim_H(A \cap B) = \min\big\{\dim_H A,\ \dim _HB\big\}.
\end{equation}
Indeed, though such a formula may seem counter-intuitive, it has been shown to hold in general frameworks  where the sets $A$ and $B$ can be written as limsup sets, see e.g. \cite{falconer2004fractal,durand2008sets,falconer1994sets} and references therein. This alternative formula is of particular relevance here in the  setting of multifractal analysis because the sets   \[ G_f(H)=\{x\in\mathbb{R}:h_f(x)\leqslant H\}\]   often are of this type. It is  e.g. the case for random wavelet series, L\'evy processes, Davenport series, the approximation by rational numbers of elements in the middle third Cantor set, see \cite{aubry2002random,jaffard1999multifractal,jaffard2000lacunary,daviaud2022heterogeneous} to mention but a few (indeed, in such cases the sets $G_f(H)$  are limsup sets). In \cite{ben2017mixed}, the generic formula obtained corresponds to the "large intersection" result in the framework of Baire category theory. This leads to a second possible general formulation for the multivariate multifractal spectrum :
\begin{equation}\label{minv}
D(H_1, H_2) = \min\big\{D(H_1), D(H_2)\big\}
\end{equation}
(note, however, that, in contrast with the sets $G_f (H)$, the sets $E_f (H)$ { \em  are not } limsup sets).  

It is still  unclear to determine the respective ranges of validity of \eqref{cod} and \eqref{minv}. Actually, very few mathematical examples have been considered and they are too scarce to yield a reliable intuition on this topic.
Therefore, obtaining explicit multivariate multifractal spectra for a wide range of examples is an important open question. In this paper, we determine explicitly multivariate multifractal spectra in some specific cases where the fractal sets involved are supplied by $b$-adic approximation properties. We will consider couples of shifted {\em L\'evy functions}, see  Definition \ref{deflev} below.  One motivation for focusing on the implications of a shift on bivariate  multifractal analysis comes from applications: data are often recorded with possible shifts the value of which is unknown; an example is supplied by EEG signals recorded at different locations, which involve  time lags between the reception of information coming from  different areas  of the brain. Therefore, the  theoretical understanding  of the implications of time shifts on bivariate multifractal analysis is an important issue.  

We divide the multivariate multifractal analysis of L\'evy functions into two parts. In the present article (Part I), we estimate the multifractal spectrum of L\'evy functions. In Part II,  we will study the validity of the multivariate multifractal formalism, see \cite{Qianpart2}.


\smallskip
Our paper is organized as follows. In Section \ref{section2.1}, we define L\'evy functions, followed by the presentation of our main results on the bivariate multifractal spectra of L\'evy functions in Section \ref{section2.2}. Section \ref{2} covers the necessary preliminaries for conducting multifractal analysis;  specifically, Section \ref{21} explores the relationship between the H\"older exponent of L\'evy functions and Diophantine approximation, while Section \ref{subsection2.2} introduces the $b$-adic approximation and symbolic space, both crucial for determining the spectra of L\'evy functions.  Section \ref{3} presents the proof of Theorem \ref{th}, which describes the part of the bivariate  spectrum  of  $L_{\alpha_1}^b$ and the shifted function  $L_{\alpha_2}^{b,y}$ that depends solely on the $b$-adic approximation properties of the translation parameter $y$. In Section \ref{section8}, we  prove  Theorem \ref{greenpartspectrum} which yields that the values taken by the  bivariate level sets  are either  empty  or satisfy the large intersection formula. Finally,  Section \ref{section7} presents the proofs of Propositions \ref{prop3} and \ref{prop1}, which provides examples demonstrating that the spectra depend on the explicit $b$-ary approximation properties of the translation $y$ (and not only on its $b$-adic approximation exponent.



\section{Statement of the main results}\label{section2}

\subsection{ L\'evy functions} \label{section2.1}

The $1$-periodic 'saw-tooth' function is 
\begin{equation}\nonumber
\begin{aligned}
\{x\}=
\left\{
             \begin{array}{lr}
            x-\lfloor x \rfloor - \frac{1}{2},&\quad \text{if}\ x\ \notin\ \mathbb{Z},\\
            0,&\quad \text{if}\ x\ \in\ \mathbb{Z},
             \end{array}
\right.
\end{aligned}
\end{equation}
where $\lfloor x \rfloor$ denotes the integer part of the real number $x$.

\begin{definition} \label{deflev} Let 
$\alpha > 0$ and $b \in \mathbb{N}$ with $ b \geqslant 2$; the L\'evy function $L_{\alpha}^b$ is 
\begin{equation} \label{lef} 
L_{\alpha}^b(x)=\sum_{i=1}^{\infty}\frac{\{b^ix\}}{b^{\alpha i}},\quad \forall x \in \mathbb{R}.
\end{equation}
\end{definition} 

L\'evy functions were introduced by Paul L\'evy \cite[Chapter 5]{PaulLevy} as  toy examples of deterministic functions which
exhibit some characteristics of L\'evy processes. They provide simple examples of function  with a dense set of discontinuities, but in the form of superpositions of "compensated jumps": the compensation is performed through the linear part of $\{b^ix\}$  (and this compensation  is required in order to  make the series converge if $0< \alpha <1$). Note that if $\alpha <0$ then the series \eqref{lef}  is convergent in the sense of distributions but its sum is no longer a locally bounded function \cite{jaffard1997old,jaffard2004davenport}. 

Since $|\{x\}|\leqslant $1, the series \eqref{lef} is normally convergent toward a $1$-periodic function. Since the function $\{b^ix\}$ is continuous except at the $b$-adic rational  points $k \cdot b^{-i}$;  it follows that  $L_\alpha^{b}$ is also continuous except at $b$-adic rational points where it has a jump of amplitude $C\cdot b^{-\alpha i} $. 

 L\'evy functions are simple examples of  { \em Davenport series}, which constitute an important class of ``pure jump functions''  that exhibit multifractal behavior \cite{jaffard2004davenport}.


Let $y \in\mathbb{R}$; the translated L\'evy function  $L_\alpha^{b,y} $ is 
\begin{equation} 
 \forall x \in \mathbb{R} \qquad L_\alpha^{b,y}(x)=\sum_{i=1}^{\infty}\frac{\{b^i(x-y)\}}{b^{\alpha i}}.\label{lef2}
\end{equation}

 Denote by $h_{L_{\alpha_1}^b}(x)$ and $h_{L_{\alpha_2}^{b,y}}(x)$ the H\"older exponents of  $L_{\alpha_1}^b$ and $L_{\alpha_2}^{b,y}$ at $x$ respectively.
Our purpose in this article is to study the level sets 
\begin{equation} \label{fin} \nonumber
E_{L_{\alpha_1}^b,L_{\alpha_2}^{b,y}}(H_1,H_2)=\left\{x\in\mathbb{R}: h_{L_{\alpha_1}^b}(x)=H_1, h_{L_{\alpha_2}^{b,y}}(x)=H_2\right\},\ (H_1, H_2 > 0),
\end{equation}
in order to determine  the bivariate multifractal spectrum
\begin{equation} \nonumber
\mathcal{D}_{L_{\alpha_1}^b,L_{\alpha_2}^{b,y}}:(H_1, H_2)\longmapsto \dim_H\left(E_{L_{\alpha_1}^b,L_{\alpha_2}^{b,y}}(H_1,H_2)\right).
\end{equation}

First, remark that  $\{ x \} = -\{ 1-x \} $ so that  $L_{\alpha}^b (x) = -L_{\alpha}^b (1-x) $ and the operation $x \rightarrow 1-x$ changes $\varepsilon_i $ into $\varepsilon_{b-i} $ in the $b$-ary expansion of $x$ and $1-x$ respectively (for the specific definition, see Subsection \ref{subsection2.2}). Then it follows that the result on the bivariate multifractal spectrum of $L_{\alpha_1}^b$ and $L_{\alpha_2}^{b,y}$ does  not change if we replace the shift $y$ of $L_{\alpha_2}^{b,y}$ by $1-y$.

\smallskip
On the other hand, we remark that for the lower level sets of L\'evy functions
\begin{equation} \nonumber
\begin{aligned}
E_{L_{{\alpha_1}}^b,L_{\alpha_2}^{b,y}}^{\leqslant}(H_1,H_2)&:=\left\{x \in \mathbb{R}: h_{L_{\alpha_1}^b}(x) \leqslant H_1, h_{L_{\alpha_2}^{b,y}}(x) \leqslant H_2\right\}\\
&=\left\{x \in \mathbb{R}: h_{L_{\alpha_1}^b}(x) \leqslant H_1\right\} \bigcap \left\{x \in \mathbb{R}: h_{L_{\alpha_2}^{b,y}}(x) \leqslant H_2\right\}
\\&=:E_{L_{{\alpha_1}}^b}^{\leqslant}(H_1)\cap E_{L_{\alpha_2}^{b,y}}^{\leqslant}(H_2),\label{uisoset}
\end{aligned}
\end{equation}
the whole multifractal spectrum is easy to obtain. Indeed, the following result of  \cite[p.193]{borosh1972generalization}, states that, for $H\in[0, {\alpha_1}],$
\begin{equation} \nonumber
\dim_H\Big(E_{L_{{\alpha_1}}^{b,y}}^{\leqslant}(H)\Big)=\dim_H\Big(E_{L_{{\alpha_1}}^b}^{\leqslant}(H)\Big)=\dim_H\Big(E_{L_{{\alpha_1}}^b}(H)\Big)= \frac{H}{{\alpha_1}}.
\end{equation}
Further, the sets $E_{L_{{\alpha_1}}^b}^{\leqslant}(H_1)$ and $E_{L_{\alpha_2}^{b,y}}^{\leqslant}(H_2)$ satisfy the large intersection property (\cite[p.279]{falconer1994sets}). Thus, for all $(H_1,H_2) \in [0,{\alpha_1}] \times [0, {\alpha_2}]$,
\begin{equation} \nonumber
\begin{aligned}
\dim_H\left(E_{L_{{\alpha_1}}^b,L_{\alpha_2}^{b,y}}^{\leqslant}(H_1,H_2)\right)&=\min\left\{\dim_H\left(E_{L_{{\alpha_1}}^b}^{\leqslant}(H_1)\right),\dim_H\left(E_{L_{\alpha_2}^{b,y}}^{\leqslant}(H_2)\right)\right\}
\\&=\min\left\{\frac{H_1}{{\alpha_1}},\frac{H_2}{{\alpha_2}}\right\}.
\end{aligned}
\end{equation}

However, observe that for the H\"older exponent $h_{L_{\alpha_1}^b}$, if $H_1\neq H_2$, then
\begin{equation} \nonumber
E_{L_{{\alpha_1}}^b}(H_1) \cap E_{L_{{\alpha_2}}^b}(H_2)=\left\{x\in \mathbb{R}:h_{L_{\alpha_1}^b}(x)=H_1\right\}\cap\big\{x\in \mathbb{R}:h_{L_{\alpha_1}^b}(x)=H_2\big\}= \emptyset.
\end{equation}
Hence, the sets $E_{L_{{\alpha_1}}^b}(H_1)$ and $E_{L_{\alpha_2}^{b,y}}(H_2)$ do not satisfy the large intersection property.
So the bivariate multifractal spectrum $\mathcal{D}_{L_{{\alpha_1}}^b, L_{\alpha_2}^{b,y}}$ can not be directly obtained as a consequence of the large intersection property.
\subsection{Bivariate multifractal spectrum} \label{section2.2}

In order to state the main results of our paper, we need to recall the notion of 
{\em $b$-adic approximation}.

\begin{definition} \label{def2}
Let $\alpha \geqslant 1$. A real number $x$ is $\alpha$-approximable by b-adic rationals if there exist infinitely many $n$ such that
\begin{equation} \nonumber
\nonumber
\lVert b^{n}x \rVert \leqslant \frac{1}{b^{n(\alpha-1)}},
\end{equation}
where $\lVert\cdot\rVert$ denotes the distance to the nearest integer. For $x\in \mathbb{R}$, its $b$-adic approximation exponent is defined by
\begin{equation}  \label{deltab} 
\Delta^{b}(x)=\sup\left\{\alpha\geqslant1: \lVert b^nx \rVert \leqslant \frac{1}{b^{n(\alpha-1)}} {\rm\ for\ infinitely \ many}\ n\right\}.
\end{equation}
\end{definition}

For $\alpha_1, \alpha_2\geqslant1$ and $y\in \mathbb{R}$, denote
\[K_{ \alpha_1,\alpha_2 }(y)=  \left[0,\frac{{\alpha_1}}{\Delta ^b(y)}\right]\times \left[0,\frac{{\alpha_2}}{\Delta ^b(y)}\right] \cup \left\{(H_1, H_2) : \  H_1\in [0,{\alpha_1}] \mbox{ and } H_2=\frac{{\alpha_2}}{{\alpha_1}} H_1 \right\} \]
(see Figure \ref{fig:M9}). 






The following theorem will be proven in Section \ref{3}.
\begin{theorem}\label{th}
The bivariate multifractal spectrum  of the L\'evy function $L_{\alpha_1}^b$ and the translated  function $L_{\alpha_2}^{b,y},$  satisfies
$$
\mathcal{D}_{L_{\alpha_1}^b,L_{\alpha_2}^{b,y}}(H_1, H_2)=
  \begin{cases}
    \min\left\{\frac{H_1}{\alpha_1}, \frac{H_2}{{\alpha_2}}\right\},       & \quad {\rm if}\ (H_1,H_2) \in K_{ \alpha_1,\alpha_2 }(y),\vspace{1ex}\\
-\infty       & \quad {\rm if} \ (H_1,H_2) \notin  [0, {\alpha_1}] \times  [0, {\alpha_2}].
  \end{cases}
$$
\end{theorem}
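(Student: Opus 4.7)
My plan is to reduce the bivariate H\"older level set to an intersection of classical $b$-adic Diophantine level sets, then combine a trivial upper bound from the univariate spectra with a Falconer-type large-intersection lower bound, handling the diagonal-outside-rectangle and the rectangle-interior regimes separately.

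\textbf{Reduction and upper bound.} First I would invoke the identification developed in Section \ref{21}, namely $h_{L_\alpha^b}(x)=\alpha/\Delta^b(x)$ for non-$b$-adic $x$, together with the translated version $h_{L_{\alpha_2}^{b,y}}(x)=\alpha_2/\Delta^b(x-y)$, to rewrite
\[ E_{L_{\alpha_1}^b,L_{\alpha_2}^{b,y}}(H_1,H_2) \;=\; A_{\delta_1}\cap\bigl(y+A_{\delta_2}\bigr), \qquad A_\delta:=\{z:\Delta^b(z)=\delta\}, \]
where $\delta_i=\alpha_i/H_i$. Since $\Delta^b\ge 1$ everywhere, the set is empty outside $[0,\alpha_1]\times[0,\alpha_2]$, which settles the $-\infty$ clause. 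The upper bound $\min\{H_1/\alpha_1,H_2/\alpha_2\}$ on all of $[0,\alpha_1]\times[0,\alpha_2]$ follows from the containment $E_{L_{\alpha_1}^b,L_{\alpha_2}^{b,y}}(H_1,H_2)\subset E^{\le}_{L_{\alpha_1}^b}(H_1)\cap E^{\le}_{L_{\alpha_2}^{b,y}}(H_2)$ together with the Borosh--Fraenkel identity $\dim_H E^{\le}_{L_{\alpha_i}^b}(H_i)=H_i/\alpha_i$ recalled after \eqref{uisoset}.

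\textbf{Lower bound via large intersection.} The sets $\{z:\Delta^b(z)\ge\delta\}$ are $\limsup$ sets built from $b$-adic balls around $b$-adic rationals and are known, by Falconer's theorem (\cite{falconer1994sets}), to lie in the class $\mathcal{G}^{1/\delta}$ of sets with the large intersection property. Because $\mathcal{G}^{s}$ is invariant under bi-Lipschitz maps, the translate $y+\{z:\Delta^b(z)\ge\delta_2\}$ also belongs to $\mathcal{G}^{1/\delta_2}$, so
\[ \dim_H\bigl(\{x:\Delta^b(x)\ge\delta_1\}\cap(y+\{z:\Delta^b(z)\ge\delta_2\})\bigr) \;\ge\; \min\{1/\delta_1,1/\delta_2\}. \]
This gives the lower bound $\min\{H_1/\alpha_1,H_2/\alpha_2\}$ for the ``$\ge$''-version of the level set, everywhere both exponents are at least $1$. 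To pass to the equality version $A_{\delta_1}\cap(y+A_{\delta_2})$ one removes the ``strictly better approximated'' subsets; by a Jarn\'{\i}k-type refinement (Section \ref{subsection2.2}) one verifies that this removal does not decrease the Hausdorff dimension, yielding the matching lower bound on $K_{\alpha_1,\alpha_2}(y)$.

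\textbf{Diagonal outside the rectangle and main obstacle.} For the diagonal portion $\delta_1=\delta_2=\delta<\Delta^b(y)$, the lower bound is reinforced by the triangle-type inequalities $\|b^n(x-y)\|\le\|b^nx\|+\|b^ny\|$ and its reverse: at the rare scales where $\Delta^b(x-y)$ is realized, the fact that $\Delta^b(y)>\delta$ forces any ``better than $\delta$'' approximation of $x-y$ to occur at a scale $n$ where $y$ is also well approximated, then $x$ would inherit this rate, contradicting $\Delta^b(x)=\delta$; up to a negligible alignment set, equality holds. The delicate step is the rectangle interior, $\delta_1,\delta_2>\Delta^b(y)$, where the large-intersection lower bound requires the translate $y+\{\Delta^b\ge\delta_2\}$ to be exhibited as a concrete $\mathcal{G}^{1/\delta_2}$ set despite the $b$-adic structure not being translation invariant. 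I expect the hardest part of the proof to be an explicit Cantor-type construction on the $b$-ary symbolic space (Section \ref{subsection2.2}): one synchronizes blocks of the $b$-ary expansion of $x$ with those of $y$ along a sparse subsequence of scales, so that both $\|b^nx\|$ and $\|b^n(x-y)\|$ are simultaneously controlled at the prescribed rates $\delta_1$ and $\delta_2$, and then a mass-distribution argument on this set produces the desired Hausdorff lower bound.
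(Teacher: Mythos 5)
Your reduction to the Diophantine level sets, your upper bound, and your closing sketch (a Cantor-type set in the $b$-ary symbolic space that synchronizes the expansions of $x$ and of $x-y$ along a sparse sequence of scales, followed by a mass-distribution/Billingsley argument) coincide with what the paper actually does; that construction is the entire content of Section \ref{3}. The genuine gap is in the route you propose to reach the lower bound, namely the large intersection property plus a ``Jarn\'ik-type refinement''. Falconer's classes do give
\[
\dim_H\Bigl(\{x:\Delta^b(x)\geqslant\delta_1\}\cap\bigl(y+\{z:\Delta^b(z)\geqslant\delta_2\}\bigr)\Bigr)\;\geqslant\;\min\bigl\{1/\delta_1,\,1/\delta_2\bigr\}
\]
for \emph{every} $y$ and all $\delta_1,\delta_2\geqslant 1$, with no restriction to $K_{\alpha_1,\alpha_2}(y)$. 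But the exact level set $E_{L_{\alpha_1}^b,L_{\alpha_2}^{b,y}}(H_1,H_2)$ can be \emph{empty} inside the rectangle yet outside $K_{\alpha_1,\alpha_2}(y)$: for $y$ a $b$-adic rational, $y+\{\Delta^b\geqslant\delta_2\}=\{\Delta^b\geqslant\delta_2\}$ up to finitely many scales, so the ``$\geqslant$''-intersection still has dimension $\min\{H_1/\alpha_1,H_2/\alpha_2\}$, while Remark \ref{examplebadicrational} shows the ``$=$''-level set is empty off the diagonal. Hence the passage from limsup sets to exact level sets cannot be a routine removal of a negligible set; indeed the removed sets $\{\Delta^b(x)>\delta_1\}$ and $\{\Delta^b(x-y)>\delta_2\}$ have Hausdorff dimensions $1/\delta_1$ and $1/\delta_2$, equal to those of the ambient limsup sets, so no dimension count excludes them. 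The hypothesis $(H_1,H_2)\in K_{\alpha_1,\alpha_2}(y)$ must enter precisely at this step, and in the paper it does so inside the Cantor construction, in the estimates \eqref{34} and \eqref{35}: the condition $H_i\leqslant\alpha_i/\Delta^b(y)$ guarantees that the prefixes of the expansion $\theta$ of $y$ used to build the words $\mathcal{W}_k$ contain no blocks of $0$'s, and no blocks matching $\theta$, long enough to push $\Delta^b(x)$ or $\Delta^b(x-y)$ strictly above the target values.

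Your triangle-inequality argument for the diagonal is also not sound: $\|b^n(x-y)\|$ is small whenever the $b$-ary digits of $x$ and $y$ agree over a long block after position $n$ (see \eqref{2.13y}), which requires neither that block to consist of $0$'s nor $\|b^ny\|$ to be small, so a good approximation of $x-y$ at scale $n$ need not be ``inherited'' by $x$ and the claimed contradiction does not arise. (The paper instead handles the diagonal by the inclusion \eqref{h2=alpha2/alpha1h1}.) In short: the upper bound and the final Cantor sketch are correct and match the paper, but the two shortcuts you offer in place of carrying out that construction do not close the argument, and the construction itself — which you leave as a sketch — is where all the work lies.
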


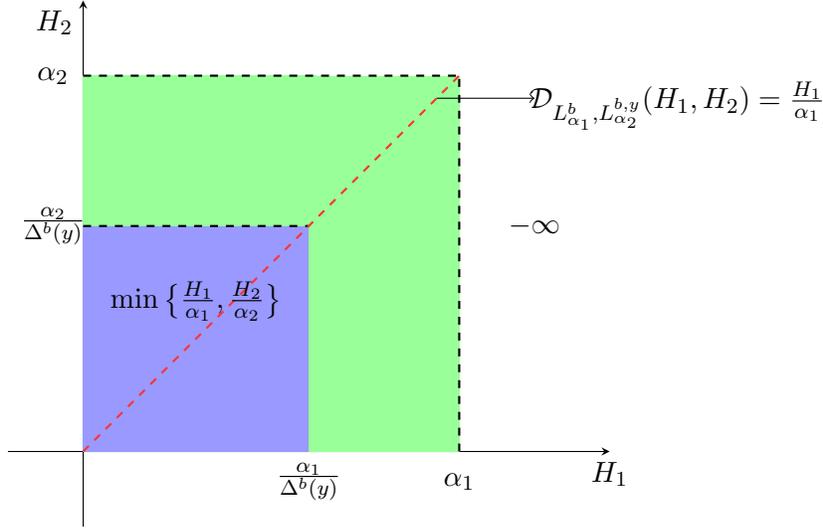
\begin{figure}[H]
\centering
\begin{tikzpicture}
\draw [-stealth](0,0) -- (8,0)node[below]{$H_1$};
\draw [-stealth](1,-1) -- (1,6)node[below,xshift=-1em]{$H_2$};
\node [black] at(4,-0.4){$\frac{{\alpha_1}}{\Delta ^b(y)}$};
\node [black] at(6,-0.4){${\alpha_1}$};
\node [black] at(0.6,3){$\frac{{\alpha_2}}{\Delta ^b(y)}$};
\node [black] at(0.6,5){${\alpha_2}$};
\draw[thick,dashed] (4,0) -- (4,3);
\fill[blue!40] (1,0) rectangle (4,3);
\fill[green!40] (4,0) rectangle (6,5);
\fill[green!40] (1,3) rectangle (4.1,5);
\draw[thick,dashed] (6,0) -- (6,5);
\draw[thick,dashed] (1,3) -- (4,3);
\draw[thick,dashed] (1,5) -- (6,5);
\draw[thick,dashed,red!80] (1,0) -- (6,5);
\node [black] at(2.5,2){$\min\big\{\frac{H_1}{{\alpha_1}}, \frac{H_2}{{\alpha_2}}\big\}$};
\node [black] at(7,3){$-\infty$};
\draw[->] (5.7,4.7) -- (7,4.7);
\node [black] at(8.9,4.6){$\mathcal{D}_{L_{{\alpha_1}}^b,L_{\alpha_2}^{b,y}}(H_1, H_2)=\frac{H_1}{{\alpha_1}}$};
\end{tikzpicture}
\caption{The spectrum on the red line is $\frac{H_1}{{\alpha_1}}$, and the spectrum on the blue rectangle is $\min\left\{\frac{H_1}{{\alpha_1}}, \frac{H_2}{{\alpha_2}}\right\}.$} \label{fig:M9}
\end{figure}

The bivariate multifractal spectrum for almost all translations follows.

\begin{corollary}
For almost every $y$,  the bivariate multifractal spectrum  of the L\'evy function $L_{\alpha_1}^b$ and the translated  function $L_{\alpha_2}^{b,y}$ is
$$
\mathcal{D}_{L_{{\alpha_1}}^b,L_{{\alpha_2}}^{b,y}}(H_1, H_2)=
  \begin{cases}
    \min\left\{\frac{H_1}{{\alpha_1}}, \frac{H_2}{{\alpha_2}}\right\},       & \quad {\rm if}\ (H_1 , H_2)\in  [0,{\alpha_1}]\times \big[0,{\alpha_2}],\\
-\infty       & \quad {\rm else}.
  \end{cases}
$$
\end{corollary}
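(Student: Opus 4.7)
The plan is to derive the corollary as a direct consequence of Theorem \ref{th} combined with the classical almost sure behavior of the $b$-adic approximation exponent, namely $\Delta^b(y)=1$ for Lebesgue-almost every $y$.

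First, I would establish that $\Delta^b(y) = 1$ almost everywhere, via a standard Borel--Cantelli argument. Since $\|\cdot\|$ and hence $\Delta^b$ are $1$-periodic, one can work on $[0,1)$. Fix $\alpha > 1$. The set
\[
A_n(\alpha) := \left\{ y \in [0,1): \|b^n y\| \leqslant b^{-n(\alpha-1)} \right\}
\]
is a disjoint union of intervals of radius $b^{-n\alpha}$ centered at the points $k\cdot b^{-n}$ with $0\leqslant k \leqslant b^n$, hence has Lebesgue measure at most $2 b^{-n(\alpha-1)}$. Because $\alpha>1$, the series $\sum_n 2b^{-n(\alpha-1)}$ converges, so by Borel--Cantelli the set $\limsup_n A_n(\alpha)$ has zero Lebesgue measure. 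Thus $\Delta^b(y)\leqslant \alpha$ for almost every $y$; taking $\alpha$ along a sequence decreasing to $1$ gives $\Delta^b(y)\leqslant 1$ a.e. The reverse inequality $\Delta^b(y)\geqslant 1$ is immediate from \eqref{deltab}, since $\|b^n y\| \leqslant 1/2$ for all $n$. Therefore $\Delta^b(y)=1$ for a.e.\ $y\in\mathbb{R}$.

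Second, I would unfold the definition of $K_{\alpha_1,\alpha_2}(y)$ in this generic case. When $\Delta^b(y)=1$, the rectangle $[0,\alpha_1/\Delta^b(y)]\times[0,\alpha_2/\Delta^b(y)]$ is exactly $[0,\alpha_1]\times[0,\alpha_2]$, and it already contains the diagonal segment $\{(H_1,(\alpha_2/\alpha_1)H_1): H_1\in[0,\alpha_1]\}$. Consequently $K_{\alpha_1,\alpha_2}(y)=[0,\alpha_1]\times[0,\alpha_2]$.

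Finally, Theorem \ref{th} applied to such $y$ gives, for every $(H_1,H_2)\in[0,\alpha_1]\times[0,\alpha_2]$,
\[
\mathcal{D}_{L_{\alpha_1}^b,L_{\alpha_2}^{b,y}}(H_1,H_2) = \min\!\left\{\frac{H_1}{\alpha_1},\frac{H_2}{\alpha_2}\right\},
\]
while outside this rectangle the spectrum is $-\infty$, yielding the claim. There is essentially no obstacle here: the whole content lies in Theorem \ref{th}, and the only ingredient one must supply is the elementary measure-theoretic fact that almost every real is only trivially $b$-adically approximable, which the argument above records for completeness.
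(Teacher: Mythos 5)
Your proposal is correct and follows essentially the same route as the paper: both reduce the corollary to the fact that $\Delta^b(y)=1$ for Lebesgue-almost every $y$ (the paper cites a covering argument from the literature where you write out the Borel--Cantelli estimate explicitly, which is a harmless and complete substitute), and then observe that $K_{\alpha_1,\alpha_2}(y)$ becomes the full rectangle $[0,\alpha_1]\times[0,\alpha_2]$ so that Theorem \ref{th} gives the claim. No gaps.
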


\begin{proof}
Let  \[E_{\alpha}=\left\{y:\ \left|y-\frac{k}{b^n}\right| \leqslant \frac{1}{b^{\alpha j}}\ \text{for\ infinite\ many  } n\right\},\] and
 \[A_{\eta}=\{y: \Delta ^b(y)=\eta\},\ (\eta\geqslant 1).\]
 Then  
\begin{equation}
\nonumber
A_{\eta}=\bigcap\limits_{\alpha\leqslant\eta} E_\alpha - \bigcup \limits_{\alpha>\eta} E_\alpha. 
\end{equation}
By a straightforward covering argument, see e.g. \cite[Theorem 2]{philipp1967some},  $E_1=\mathbb{R}$ and $m(E_\alpha)=0$ for any $\alpha>1$, where $m(\cdot)$ denote the Lebesgue measure. Hence, for almost all $y$, $\Delta ^b(y)=1$, and 
\[
K_{ \alpha_1,\alpha_2 }(y)=[0,{\alpha_1}]\times \big[0,{\alpha_2}].
\]
\end{proof}



\begin{remark}
The bivariate multifractal spectrum $\mathcal{D}_{L_{\alpha_1}^b,L_{\alpha_2}^{b,y}}$ in the region $K_{ \alpha_1,\alpha_2 }(y)$ only depends on the $b$-adic approximation properties and is given by Theorem \ref{th} for any parameter $y\in \mathbb{R}$. But the spectrum in the region 
 $ [0,{\alpha_1}]\times [0,{\alpha_2}] \setminus K_{ \alpha_1,\alpha_2 }(y)$ (the green part in Figure \ref{fig:M9}), can be different for different values of $y$ which share the same $b$-adic approximation exponent $\Delta ^b(y)$: it actually  depends on the explicit expression of the $b$-ary expansion (see Subsection \ref{subsection2.2}) of $y$, as shown by Propositions \ref{prop3} and \ref{prop1}. The full determination of the  support of the bivariate multifractal spectra for all values of $y$ remains  to be settled. However, the following result shows that this spectrum can take two values only.  
\end{remark}



\begin{theorem}\label{greenpartspectrum}
    For any $(H_1,H_2)\in [0,{\alpha_1}]\times [0,{\alpha_2}]\mathbin{\Big\backslash} K_{ \alpha_1,\alpha_2 }(y)$, the bivariate multifractal spectrum  of the L\'evy function $L_{\alpha_1}^b$ and the translated  function $L_{\alpha_2}^{b,y}$ is
     either
    \begin{equation}
        \nonumber
        \mathcal{D}_{L_{{\alpha_1}}^b,L_{\alpha_2}^{b,y}}(H_1, H_2)=\min\left\{\frac{H_1}{{\alpha_1}}, \frac{H_2}{{\alpha_2}}\right\},
    \end{equation}
    or 
    \begin{equation}
        \nonumber
        \mathcal{D}_{L_{{\alpha_1}}^b,L_{\alpha_2}^{b,y}}(H_1, H_2)=-\infty.
    \end{equation}
\end{theorem}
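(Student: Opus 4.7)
The plan is to observe first that the upper bound $\mathcal{D}_{L_{\alpha_1}^b,L_{\alpha_2}^{b,y}}(H_1,H_2) \leqslant \min\{H_1/\alpha_1,\, H_2/\alpha_2\}$ is immediate and holds everywhere in $[0,\alpha_1]\times[0,\alpha_2]$: the bivariate level set is contained in each of the univariate level sets $E_{L_{\alpha_1}^b}(H_1)$ and $E_{L_{\alpha_2}^{b,y}}(H_2)$, whose Hausdorff dimensions are $H_1/\alpha_1$ and $H_2/\alpha_2$ by the Borosh--Fraenkel type result recalled earlier. Thus the real content of the theorem is the dichotomy: if $E_{L_{\alpha_1}^b,L_{\alpha_2}^{b,y}}(H_1,H_2)$ is non-empty, then its dimension must actually attain the minimum. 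I would therefore set out to prove the claim that any single point in the level set can be thickened into a Cantor-type subset of dimension $\min\{H_1/\alpha_1,H_2/\alpha_2\}$.

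First I would translate the problem, using Section \ref{21}, into the purely Diophantine statement that a point $x$ lies in the bivariate level set iff $\Delta^b(x)=\delta_1:=\alpha_1/H_1$ and $\Delta^b(x-y)=\delta_2:=\alpha_2/H_2$; letting $\delta:=\max(\delta_1,\delta_2)$, the target dimension is $1/\delta$. Picking a witness $x_0$, I would extract the sparse sequences of scales $(n_k)_k$ and $(m_\ell)_\ell$ at which the optimal $b$-adic approximations realizing $\Delta^b(x_0)=\delta_1$ and $\Delta^b(x_0-y)=\delta_2$ occur. In the symbolic coding of Section \ref{subsection2.2}, these scales correspond to blocks of forced digits (runs of $0$ or of $b-1$) of prescribed lengths in the $b$-ary expansions of $x_0$ and of $x_0-y$.

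Then I would build a Cantor subset $\mathcal{K}$ of the level set containing $x_0$ by freezing the digits of the $b$-ary expansion of $x_0$ that lie inside these forced blocks, together with a small buffer on either side of each block to absorb carry propagation in the subtraction $x-y$, and by letting all remaining digit positions vary freely over $\{0,1,\dots,b-1\}$. Two things need to be checked: (i) for every $x\in\mathcal{K}$, the preserved blocks immediately yield $\Delta^b(x)\geqslant\delta_1$ and $\Delta^b(x-y)\geqslant\delta_2$; (ii) the free choices of digits do not accidentally produce longer runs, which would give strictly larger exponents. Point (ii) is handled by choosing the variable positions in a sufficiently sparse symbolic pattern, in the spirit of the constructions of Section \ref{subsection2.2}, and excluding a thin set of "collision" configurations. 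A mass distribution argument applied to the natural uniform measure on $\mathcal{K}$ then yields $\dim_H \mathcal{K}=1/\delta$, completing the lower bound.

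The main obstacle will be the control of $x-y$ as $x$ varies in $\mathcal{K}$: the $b$-ary expansion of $x-y$ is related to that of $x$ in a non-local way through borrows involving the fixed expansion of $y$, so a digit modification of $x$ at scale $N$ is not simply a digit modification of $x-y$ at scale $N$. I would address this by establishing a quantitative carry-propagation lemma stating that, in a small enough $b$-adic neighborhood of $x_0$, perturbations of $x$ at scale $N$ affect the digits of $x-y$ only in a window of bounded width around $N$. Thanks to the sparsity of the scales $(n_k)$ and $(m_\ell)$, these buffer windows can be inserted without disturbing either the lower-bound argument for the exponents or the dimension count, which closes the proof.
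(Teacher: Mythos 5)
Your overall strategy is sound and leads to the same conclusion, but it is organized differently from the paper. The paper's proof runs the dichotomy on an explicit combinatorial property of the digits of $y$: either, for infinitely many $n$, the runs of $0$'s of $\theta$ inside the relevant windows are short enough (conditions \ref{c1} and \ref{c2}), in which case the Cantor construction of Theorem \ref{th} goes through verbatim and gives the lower bound $\min\{H_1/\alpha_1,H_2/\alpha_2\}$; or this fails for all large $n$, in which case a direct argument shows that any $x$ in the level set would have to satisfy $\Delta^b(x)>\alpha_1/H_1$ (or $\Delta^b(x-y)>\alpha_2/H_2$), so the set is empty. You instead run the dichotomy on non-emptiness and, given a witness $x_0$, extract from \emph{its} expansion the scales at which the two approximation exponents are realized, observing that the exactness of $\Delta^b(x_0)=\alpha_1/H_1$ and $\Delta^b(x_0-y)=\alpha_2/H_2$ forces the digits of $y$ to be well-behaved inside those windows. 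This is essentially the contrapositive of the paper's emptiness step, and it buys you something: you never have to characterize when the level set is empty, only to thicken a witness. What it costs you is a more delicate extraction: the suprema defining $\Delta^b(x_0)$ and $\Delta^b(x_0-y)$ need not be attained, so the frozen runs only have lengths $n(\delta_i-1-\epsilon_k)$ with $\epsilon_k\to 0$ and you must diagonalize over the approximation quality; moreover the two families of windows coming from $x_0$ must be interleaved sparsely (as the paper does with $n_{2k}$ versus $n_{2k+1}$) to make the liminf of the free-digit density equal to $\min\{H_1/\alpha_1,H_2/\alpha_2\}$ rather than something smaller. Your point about letting the remaining digits vary \emph{freely} needs the same correction as in the paper's construction: separator patterns (the periodic $01\overline{\theta}$ insertions of $\mathcal{W}_{2k+1}$, $\mathcal{W}_{2k+2}$) must be imposed at positive but negligible density to prevent free choices from extending a run of $0$'s or a run of agreement with $\theta$; "excluding a thin set of collision configurations" is the right idea but should be made concrete in this form. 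Finally, your carry-propagation worry is legitimate but is resolved more simply than by a perturbation lemma: the paper works entirely in symbolic space and controls $\lVert b^n(x-y)\rVert$ by the length of digit agreement between $\sigma^n(\varepsilon)$ and $\sigma^n(\theta)$ as in \eqref{2.13y}, the separator patterns being chosen precisely so that the degenerate case (agreement terminating with a $10^\infty$ versus $0(b-1)^\infty$ ambiguity) cannot occur.
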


The proof of Theorem \ref{greenpartspectrum} will be given in Section \ref{section8}.

\begin{remark}\label{examplebadicrational}
For any   rational number $y$, if $y$ is a $b$-adic rational number, then the bivariate multifractal spectrum of the L\'evy functions $L_{\alpha_1}^b$ and  $L_{\alpha_2}^{b,y}$ takes  the following values on the diagonal  $\frac{H_1}{{\alpha_1}} = \frac{H_2}{{\alpha_2}}$, 
\begin{equation}
    \begin{aligned}
        \nonumber
        \mathcal{D}_{L_{{\alpha_1}}^b,L_{\alpha_2}^{b,y}}(H_1, H_2)=
  \begin{cases}
    \frac{H_1}{{\alpha_1}},       & \quad {\rm if}\ H_2 = \frac{{\alpha_2}}{{\alpha_1}}H_1,\ \text{and}\ H_1\in [0,\alpha_1],\\
-\infty       & \quad {\rm else}.
  \end{cases}
    \end{aligned}
\end{equation}
See Figure \ref{binaryexam}.

If $y$ is a non-$b$-adic rational number, then the bivariate multifractal spectrum of the L\'evy functions $L_{\alpha_1}^b$ and  $L_{\alpha_2}^{b,y}$ is 
$$
\mathcal{D}_{L_{{\alpha_1}}^b,L_{{\alpha_2}}^{b,y}}(H_1, H_2)=
  \begin{cases}
    \min\left\{\frac{H_1}{{\alpha_1}}, \frac{H_2}{{\alpha_2}}\right\},       & \quad {\rm if}\ (H_1 , H_2)\in  [0,{\alpha_1}]\times \big[0,{\alpha_2}],\\
-\infty       & \quad {\rm else}.
  \end{cases}
$$

In fact, on one hand, if $y$ is $b$-adic rational, then the two L\'evy functions differ only by a finite sum of terms $\frac{\{b^iy\}}{b^{\alpha i}}$ and the result is immediate. On the other hand, if $y$ is non-$b$-adic rational, then $\Delta^b(y)=1$. Consequently, the result follows directly as a corollary of Theorem \ref{th}.  
\end{remark}
\begin{figure}[H]
\centering
\begin{tikzpicture}[scale=0.75]
\draw [-stealth](0,0) -- (8,0)node[below]{$H_1$};
\draw [-stealth](1,-1) -- (1,6)node[below,xshift=-1em]{$H_2$};
\draw[thick,dashed] (6,0) -- (6,5);
\draw[thick,dashed] (1,5) -- (6,5);
\node [black] at(6,-0.4){${\alpha_1}$};
\node [black] at(0.6,5){${\alpha_2}$};
\draw[thick,dashed,red!80] (1,0) -- (6,5);
\draw[->] (5.5,4.5) -- (7,4.5);
\node [black] at(9.7,4.5){$\mathcal{D}_{L_{{\alpha_1}}^b,L_{\alpha_2}^{b,y}}(H_1, H_2)=\frac{H_1}{{\alpha_1}}$};
\node [black] at(5.3,2.5){$-\infty$};
\node [black] at(2.7,4.5){$-\infty$};
\node [black] at(7,3){$-\infty$};
\end{tikzpicture}
\caption{} \label{binaryexam}
\end{figure} 




To end this section, we state the following  propositions which show that the bivariate multifractal spectrum depends on the explicit expression of the $b$-ary expansion of $y$ when $(H_1,H_2)$ satisfies $\min\left\{\frac{{\alpha_1}}{H_1} ,\frac{{\alpha_2}}{H_2}\right\}<\Delta ^b(y)$ (the green part in Figure \ref{fig:M9}). 


\begin{proposition} \label{prop3}
Let $\eta> 1$.  Let $\{l_i\}$ be an increasing sequence of integers defined by 
    \begin{equation}\nonumber
        l_{i+1}=\lfloor l_i\eta \rfloor,\ \forall i\geqslant 1.
    \end{equation} 
    and $\nu_k=0^{\lfloor l_i\eta\rfloor-l_i-1}1$, where $l_1$ is a sufficiently large integer.
 If the $b$-ary expansion of $y$ is $\nu_1\nu_2\nu_3\cdots$ (see Figure \ref{prop3f}),
then  $\Delta ^b(y)=\eta$, and the bivariate multifractal spectrum of the L\'evy functions $L_{\alpha_1}^b$ and  $L_{\alpha_2}^{b,y}$ is
$$
\mathcal{D}_{L_{{\alpha_1}}^b,L_{\alpha_2}^{b,y}}(H_1, H_2)=
  \begin{cases}
    \frac{H_1}{{\alpha_1}},  & \ {\rm for} \quad \big(H_1, \frac{{\alpha_2}}{{\alpha_1}} H_1\big) \ {\rm with} \ H_1\in [0,  {\alpha_1} ],\vspace{1ex}\\
    \min\left\{\frac{H_1}{{\alpha_1}}, \frac{H_2}{{\alpha_2}}\right\},       & \ {\rm if}\ (H_1,H_2) \in \Big{[}0,\frac{{\alpha_1}}{\eta}\Big{]}\times \Big{[}0,\frac{{\alpha_2}}{\eta}\Big{]},\vspace{1ex}
    \\&\ {\rm or}\ H_1 \in \Big{[}\frac{\alpha_1(H_2)^2}{(\alpha_2)^2},\frac{{\alpha_1H_2}}{{\alpha_2}}\Big{]} \ {\rm with} \ H_2\in \left[\frac{\alpha_2}{\eta},  {\alpha_2} \right]
    ,\vspace{1ex}\\&\ {\rm or}\ H_2 \in \Big{[}\frac{\alpha_2(H_1)^2}{(\alpha_1)^2},\frac{{\alpha_2H_1}}{{\alpha_1}}\Big{]} \ {\rm with} \ H_1\in \left[\frac{\alpha_1}{\eta},  {\alpha_1} \right]\vspace{1ex}\\
-\infty       & \ {\rm else}.
  \end{cases}
$$
See Figure \ref{fig:M3}, for illustration.
\end{proposition}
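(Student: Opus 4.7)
Plan: The strategy is to reduce the multifractal question to a simultaneous Diophantine problem about $\Delta^b(x)$ and $\Delta^b(x-y)$ via the identification recalled in Section \ref{21}, and to exploit the specific gap structure $l_{i+1}=\lfloor l_i\eta\rfloor$ of the nonzero $b$-ary digits of $y$. A direct digit calculation gives $\|b^n y\|\asymp b^{n-l_{i+1}}$ for $n\in[l_i,l_{i+1})$, so the local approximation exponent at scale $n$ equals $l_{i+1}/n$ and $\Delta^b(y)=\limsup_i l_{i+1}/l_i=\eta$. Plugging $\eta=\Delta^b(y)$ into Theorem \ref{th} with $K_{\alpha_1,\alpha_2}(y)=[0,\alpha_1/\eta]\times[0,\alpha_2/\eta]\cup\{(H_1,\alpha_2 H_1/\alpha_1):H_1\in[0,\alpha_1]\}$ produces the two pieces of the claimed spectrum supported on the small rectangle and on the diagonal line.

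For the two ``side lenses'' --- in exponent coordinates $(\delta_1,\delta_2):=(\alpha_1/H_1,\alpha_2/H_2)$ these are $\delta_2\leqslant\delta_1\leqslant\delta_2^2$ with $\delta_2\in[1,\eta]$ and its symmetric counterpart --- Theorem \ref{greenpartspectrum} reduces the task to exhibiting one point in each level set. I would construct such a point by placing the nonzero $b$-ary digits of $x$ at positions $p_i$ arranged in ``peaks'': at the $k$-th peak, $p_{i_k}\approx l_{j_k+1}/\delta_2$ (which makes $\|b^{p_{i_k}}(x-y)\|\asymp\|b^{p_{i_k}}y\|$ have associated exponent close to $\delta_2$) and $p_{i_k+1}\approx\delta_1\,p_{i_k}\leqslant\delta_2 l_{j_k+1}$, which is precisely where the bound $\delta_1\leqslant\delta_2^2$ enters. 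Between peaks I space the nonzero digits of $x$ with ratios $\leqslant\delta_2$, harmless for both exponents. A scale-by-scale verification via the identity $\|b^m(x-y)\|\asymp b^{m-\min(q(m),l(m))}$, where $q(m)$ and $l(m)$ denote the next nonzero positions above $m$ of $x$ and of $y$ respectively, then yields $\Delta^b(x)=\delta_1$ and $\Delta^b(x-y)=\delta_2$.

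For $(H_1,H_2)\in[0,\alpha_1]\times[0,\alpha_2]$ lying outside $K_{\alpha_1,\alpha_2}(y)$ and outside both lenses, I establish emptiness by contradiction. Assume $\Delta^b(x)=\delta_1$ and $\Delta^b(x-y)=\delta_2$ with, say, $\delta_2<\delta_1$, $\delta_2<\eta$ and $\delta_1>\delta_2^2$; along a subsequence of indices $i$ the ratio $p_{i+1}/p_i$ of consecutive nonzero positions of $x$ exceeds $\delta_2^2$. The identity above evaluated at $m=p_i$ forces $p_i\geqslant l_{j+1}/\delta_2$, where $l_{j+1}$ is the smallest $l$-position above $p_i$; hence $p_{i+1}>\delta_2 l_{j+1}>l_{j+1}$, so the intermediate scale $m=l_{j+1}$ belongs to $(p_i,p_{i+1})$. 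Re-evaluating the same identity at $m=l_{j+1}$ gives $\|b^{l_{j+1}}(x-y)\|\asymp b^{l_{j+1}-p_{i+1}}$, i.e.\ the approximation exponent of $x-y$ at $l_{j+1}$ equals $p_{i+1}/l_{j+1}>\delta_2$, contradicting $\Delta^b(x-y)=\delta_2$. The symmetric orientation of the exponents is handled identically.

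The main obstacle is the construction step: producing $x$ whose digits realize the \emph{exact} limsup values $\delta_1$ and $\delta_2$ (and not something strictly larger) requires scale-by-scale control of $\Delta^b(x-y)$, especially at the $l_k$-positions that fall between consecutive $p_i$, where the base-$b$ borrows cascade and may accidentally create a better approximation of $x-y$. The emptiness step in the third paragraph uses the same digit identity but only in one direction (a single lower bound on $\|b^{l_{j+1}}(x-y)\|$) and is structurally simpler, though some care is still needed when several $l_k$ lie inside one gap $(p_i,p_{i+1})$.
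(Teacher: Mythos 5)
Your proposal is correct and follows essentially the same route as the paper: compute $\Delta^b(y)=\eta$ and invoke Theorem \ref{th} on $K_{\alpha_1,\alpha_2}(y)$, realize the lens regions by a digit construction whose feasibility is exactly the condition $\delta_1\leqslant\delta_2^2$ (the paper phrases this as the existence of an integer $n_i\in\left[l_{i+1}H_1/\alpha_1,\ l_{i+1}\alpha_1H_2/(\alpha_2H_1)\right]$ and then verifies conditions (i)--(ii) from Section \ref{section8}), and prove emptiness elsewhere by a Diophantine contradiction at the scales $l_{j+1}$. The only differences are cosmetic: you run the emptiness contradiction from the $\Delta^b(x)$ side (a long $0$-block of $x$ straddling some $l_{j+1}$ forces $\Delta^b(x-y)>\delta_2$) whereas the paper runs the mirror image (copying windows forced by $\Delta^b(x-y)$ trap a long $0$-block of $y$ and force $\Delta^b(x)>\alpha_1/H_1$), and for a fully general $x$ in the level set your ``consecutive nonzero positions of $x$'' should be rephrased in terms of maximal blocks of $0$'s or $(b-1)$'s.
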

\vspace{2mm}

 \begin{figure}[H]
\centering
\begin{tikzpicture}
\node[black] at (-2.2,0.3){$\cdots$};
\node[black] at(-1.9,0.3){$0$};
\node[black] at(-1.7,0.3){$0$};
\node[black] at(-1.3,0.3){$\cdots$};
\node[black] at(-0.9,0.3){$0$};
\node[black] at(-0.7,0.3){$0$};
\node[black] at(-0.5,0.3){$0$};
\node[red] at(-0.3,0.3){$1$};
\draw [decorate,decoration={brace,mirror}]
  (-1.9,0) -- (-0.4,0) node[midway,yshift=-1em]{{\footnotesize$\lfloor l_{i}\eta\rfloor -l_{i}-1$}};
\node[black] at(-0.1,0.3){$0$};
\node [black] at(0.1,0.3){$0$};
\node [black] at(0.3,0.3){$0$};
\node[black] at(0.7,0.3){$\cdots$};
\node [black] at(1.1,0.3){$0$};
\node [black] at(1.3,0.3){$0$};
\node [black] at(1.5,0.3){$0$};
\node [black] at(1.7,0.3){$0$};
\node [red] at(1.9,0.3){$1$};
\draw [decorate,decoration={brace,mirror}]
  (-0.1,0) -- (1.8,0) node[midway,yshift=-1em]{};
\node[black] at(2.1,0.3){$0$};
\node [black] at(2.3,0.3){$0$};
\node [black] at(2.5,0.3){$0$};
\node [black] at(2.7,0.3){$0$};
\node[black] at(3.1,0.3){$\cdots$};
\node [black] at(3.5,0.3){$0$};
\node [black] at(3.7,0.3){$0$};
\node [black] at(3.9,0.3){$0$};
\node [black] at(4.1,0.3){$0$};
\node [black] at(4.3,0.3){$0$};
\node [red] at(4.5,0.3){$1$};
\draw [decorate,decoration={brace,mirror}]
  (2.1,0) -- (4.4,0) node[midway,yshift=-1em]{};
  \node[black] at(4.7,0.3){$0$};
\node [black] at(4.9,0.3){$0$};
\node [black] at(5.1,0.3){$0$};
\node [black] at(5.3,0.3){$0$};
\node [black] at(5.5,0.3){$0$};
\node[black] at(5.9,0.3){$\cdots$};
\node [black] at(6.3,0.3){$0$};
\node [black] at(6.5,0.3){$0$};
\node [black] at(6.7,0.3){$0$};
\node [black] at(6.9,0.3){$0$};
\node [black] at(7.1,0.3){$0$};
\node [black] at(7.3,0.3){$0$};
\node [red] at(7.5,0.3){$1$};
\draw [decorate,decoration={brace,mirror}]
  (4.7,0) -- (7.4,0) node[midway,yshift=-1em]{{\footnotesize$\lfloor l_{i+3}\eta\rfloor -l_{i+3}-1$}};
  \node[black] at(7.9,0.3){$\cdots$};
\end{tikzpicture}
\caption{The $b$-ary expansion of $y$.}\label{prop3f}
\end{figure}

\begin{figure}[H]
\centering
\begin{tikzpicture}
\draw [-stealth](0,0) -- (8,0)node[below]{$H_1$};
\draw [-stealth](1,-1) -- (1,6)node[below,xshift=-1em]{$H_2$};
\node [black] at(4,-0.4){$\frac{{\alpha_1}}{\eta}$};
\node [black] at(6,-0.4){${\alpha_1}$};
\node [black] at(0.6,3){$\frac{{\alpha_2}}{\eta}$};
\node [black] at(0.6,5){${\alpha_2}$};
\draw[thick,dashed] (4,0) -- (4,1.5);
\fill[blue!40] (1,0) rectangle (4,3);
\fill[blue!40] (6,5)--(3.5,5)--(1,2.0)--(2.7,0.2)--(6,2) ;
\fill[yellow!40] (4,0)--(6,0)--(6,5)--(4,2.1);
\fill[yellow!40] (1,3)--(3.3,3)--(6,5)--(1,5);
\node [black] at(8.3,4.5){$H_2=\frac{\alpha_2H_1}{{\alpha_1}}$};
\draw[thick,dashed] (6,0) -- (6,5);
\draw[thick,dashed] (1,3) -- (4,3);
\draw[thick,dashed] (1,5) -- (6,5);
\draw[thick,dashed,red!80] (1,0) -- (6,5);
\node [black] at(2.2,4.7){$-\infty$};
\node [black] at(2.5,2){$\mathcal{D}=\min\big\{\frac{H_1}{{\alpha_1}}, \frac{H_2}{{\alpha_2}}\big\}$};
\draw (1,0) .. controls (1.5,1.6) .. (6,5);
\draw (1,0) .. controls (3,0.65) .. (6,5);
\node [black] at(10,3){$-\infty$};
\node [black] at(5,1.5){$-\infty$};
\draw[->] (4.1,2.3) -- (7,2.3);
\node [black] at(8.5,2.3){$H_2=\frac{\alpha_2(H_1)^2}{(\alpha_1)^2}$};
\draw[->] (3.6,3.2) -- (3.6,5.4);
\node [black] at(3.2,6){$H_1=\frac{\alpha_1(H_2)^2}{(\alpha_2)^2}$};
\draw[->] (5.5,4.5) -- (7,4.5);
\draw[thick,dashed] (4,0) -- (4,3);
\end{tikzpicture}
\caption{} \label{fig:M3}
\end{figure}

\begin{proposition}
    \label{prop1}
    Let $\{l_i\}$ be an increasing sequence of integers satisfying 
 \begin{equation}\nonumber
     \begin{aligned}
         \lim\limits_{i\to+\infty}\frac{l_{i-1}}{l_i}=0.
     \end{aligned}
 \end{equation}
 Let 
 \[
 \nu_i=
 \begin{cases}
   0^{\lfloor l_i\eta\rfloor-l_i}1(01)^{\lfloor(l_{i+1}-\lfloor l_i\eta\rfloor-1)/2\rfloor} & \text{if $(l_{i+1}-\lfloor l_i\eta\rfloor-1)/2$ is even,}\\
  0^{\lfloor l_i\eta\rfloor-l_i}1(01)^{\lfloor(l_{i+1}-\lfloor l_i\eta\rfloor-1)/2\rfloor}1  & \text{if $(l_{i+1}-\lfloor l_i\eta\rfloor-1)/2$ is odd}.
 \end{cases} 
 \]
 If the $b$-ary expansion of $y$ is $\nu_1\nu_2\nu_3\cdots$ (see Figure \ref{prop27y}),
then the bivariate multifractal spectrum of the L\'evy functions $L_{\alpha_1}^b$ and  $L_{\alpha_2}^{b,y}$ is
$$
\mathcal{D}_{L_{{\alpha_1}}^b,L_{\alpha_2}^{b,y}}(H_1, H_2)=
  \begin{cases}
    \min\left\{\frac{H_1}{{\alpha_1}}, \frac{H_2}{{\alpha_2}}\right\},       & \quad {\rm if}\ (H_1 , H_2)\in  [0,{\alpha_1}]\times \big[0,{\alpha_2}],\\
-\infty       & \quad {\rm else}.
  \end{cases}
$$
\end{proposition}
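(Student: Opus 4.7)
My plan is to combine Theorems~\ref{th} and~\ref{greenpartspectrum} with a direct $b$-ary construction that exploits the alternating portions of the expansion of $y$. I would first verify that $\Delta^b(y)=\eta$: inside each block $\nu_i$ the only maximal run of consecutive zeros of length exceeding one is the initial run of length $\lfloor l_i\eta\rfloor-l_i$ beginning at position $l_i$, and since every block terminates with a~$1$ these initial runs do not concatenate across successive blocks. This yields $\|b^{l_i-1}y\|\leqslant b^{-(\lfloor l_i\eta\rfloor-l_i)}$, hence $\Delta^b(y)\geqslant\eta$; conversely, the hypothesis $\lim_{i\to\infty}l_{i-1}/l_i=0$ forbids any exponent strictly larger than $\eta$ from being attained infinitely often. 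Consequently, Theorem~\ref{th} already supplies the value $\min\{H_1/\alpha_1,H_2/\alpha_2\}$ on $K_{\alpha_1,\alpha_2}(y)$ together with the value $-\infty$ outside $[0,\alpha_1]\times[0,\alpha_2]$, while Theorem~\ref{greenpartspectrum} reduces the remaining ``green'' region to verifying that the level set $E_{L_{\alpha_1}^b,L_{\alpha_2}^{b,y}}(H_1,H_2)$ is nonempty for every $(H_1,H_2)\in([0,\alpha_1]\times[0,\alpha_2])\setminus K_{\alpha_1,\alpha_2}(y)$.

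Via the Diophantine characterization recalled in Section~\ref{21}, namely $h_{L_{\alpha_j}^b}(\cdot)=\alpha_j/\max\{1,\Delta^b(\cdot)\}$, this task becomes: for any prescribed $u_1:=\alpha_1/H_1$ and $u_2:=\alpha_2/H_2$ with at least one of them strictly less than $\eta$ and with $u_1\neq u_2$, exhibit a single $x$ satisfying $\Delta^b(x)=u_1$ and $\Delta^b(x-y)=u_2$. The essential feature of the present $y$ is that each block $\nu_i$ contains, besides its initial zero run, a long alternating $0101\cdots$ portion whose length is comparable to $l_{i+1}$ by the growth hypothesis on $(l_i)$. Within such an alternating portion two operations are available \emph{independently}: setting $x_n=0$ on a subinterval creates a zero run in the expansion of $x$ while producing only an alternating (hence run-free) sequence in $x-y$, whereas setting $x_n=y_n$ on a subinterval creates a zero run in $x-y$ while producing an alternating sequence in $x$.

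Building on this, I would define $x$ as follows. Choose two disjoint sparse increasing sequences $(p_k)$ and $(q_k)$, each lying deep inside the alternating portion of some block $\nu_{i_k}$, so that the intervals $[p_k+1,\,p_k+\lceil p_k(u_1-1)\rceil]$ and $[q_k+1,\,q_k+\lceil q_k(u_2-1)\rceil]$ are pairwise disjoint and contained in the corresponding alternating portions. On the $p_k$-intervals set $x_n=0$; on the $q_k$-intervals set $x_n=y_n$; at every other position set $x_n$ equal to a ``generic'' digit, chosen in a fixed way so as to prevent the formation of additional zero or $(b-1)$-runs in either $x$ or $x-y$ (for small values of $b$ this requires a slight refinement that also uses runs of digit $b-1$). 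Direct inspection then gives $\Delta^b(x)\geqslant u_1$ and $\Delta^b(x-y)\geqslant u_2$.

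The hard part will be the matching upper bounds, i.e.\ excluding spurious long runs either inside a single block or straddling a block boundary. Inside a block the generic digit prevents any extra run from forming. Across a boundary the delicate transition is from the alternating portion of $\nu_i$ to the initial zero run of $\nu_{i+1}$; there $y_n=0$, so assigning generic non-zero digits to $x$ immediately produces non-zero digits in $x-y$ and aborts any potential long run. The hypothesis $\lim_{i\to\infty}l_{i-1}/l_i=0$ then ensures that any approximation inherited from these small transition regions has exponent tending to $1$ asymptotically, so that the $\limsup$s in the definition~\eqref{deltab} of $\Delta^b(x)$ and $\Delta^b(x-y)$ equal exactly $u_1$ and $u_2$. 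This yields nonemptiness of the level set and, combined with Theorem~\ref{greenpartspectrum}, completes the proof.
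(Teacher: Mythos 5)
Your proposal is correct and reaches the stated spectrum, but it packages the argument differently from the paper. The paper's proof is very short: it sets $n_i=\lfloor l_i\eta\rfloor$, observes that from these positions onward the expansion of $y$ is alternating (so every zero run of $y$ in the relevant windows has length $1$), verifies that the sufficient conditions (i)--(ii) isolated in the proof of Theorem \ref{greenpartspectrum} hold for \emph{every} $(H_1,H_2)$ in the rectangle, and then reruns the Cantor-set-plus-mass-distribution construction of Theorem \ref{th} to obtain the dimension lower bound directly. You instead treat Theorem \ref{greenpartspectrum} as a black box: since a nonempty set has Hausdorff dimension at least $0>-\infty$, the dichotomy reduces the region outside $K_{\alpha_1,\alpha_2}(y)$ to exhibiting a single point $x$ with $\Delta^b(x)=\alpha_1/H_1$ and $\Delta^b(x-y)=\alpha_2/H_2$, which you build by hand inside the alternating stretches. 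This is a legitimate and economical shortcut (no measure to estimate), and the underlying mechanism --- zero runs of $x$ and agreement runs of $x$ with $\theta$ can be planted independently inside the $0101\cdots$ portions, whose length is of order $l_{i+1}$ and hence dwarfs their starting position $\approx\eta l_i$ --- is exactly the paper's. Two cautions. First, what controls $\lVert b^n(x-y)\rVert$ is the length of agreement between the digit strings of $x$ and $y$ (inequality \eqref{2.13y}), not literal ``runs in the expansion of $x-y$''; your filler digits must therefore simultaneously avoid long runs of $0$ and of $b-1$ in $x$ \emph{and} disagree with $\theta$ within a bounded number of steps from every position. For $b=2$, where $\theta$ is itself alternating, a constant or in-phase alternating filler fails one of these requirements; a period-four word such as $0011$ works (its agreement with $0101\cdots$ never exceeds one digit and it contains no run longer than $2$). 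You flagged this issue but left it unresolved; it is fixable and does not invalidate the approach. Second, your computation of $\Delta^b(y)=\eta$ is not actually needed for the final statement, since every point of the rectangle is covered either by Theorem \ref{th} (if it lies in $K_{\alpha_1,\alpha_2}(y)$) or by your nonemptiness argument (if not); like the paper, you leave the boundary cases $H_1=0$ or $H_2=0$ implicit.
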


\begin{figure}[H]
\centering
\begin{tikzpicture}
\node[black] at(-3.6,0.3){$\cdots$};
\node[black] at(-3.3,0.3){$0$};
\node[black] at(-3.1,0.3){$0$};
\node[black] at(-2.3,0.3){$0$};
\node[black] at(-2.7,0.3){$\cdots$};
\node[red] at(-2.1,0.3){$1$};
\node[black] at(-1.9,0.3){$0$};
\draw [decorate,decoration={brace,mirror}]
  (-3.3,0) -- (-2.3,0) node[midway,yshift=-1em]{\footnotesize{$\lfloor l_i\eta\rfloor-l_i$}};
\node[black] at(-1.7,0.3){$1$};
\node[black] at(-1.5,0.3){$0$};
\node[black] at(-1.3,0.3){$1$};
\node[black] at(-1.1,0.3){$0$};
\node[black] at(-0.9,0.3){$1$};
\node[black] at(-0.7,0.3){$0$};
\node[black] at(-0.5,0.3){$1$};
\node[black] at(-0.3,0.3){$0$};
\node[black] at(-0.1,0.3){$1$};
\node [black] at(0.1,0.3){$0$};
\node [black] at(0.3,0.3){$1$};
\node [black] at(0.5,0.3){$0$};
\node[black] at(0.9,0.3){$\cdots$};
\node [black] at(1.3,0.3){$0$};
\node [black] at(1.5,0.3){$1$};
\node [black] at(1.7,0.3){$0$};
\node [black] at(1.9,0.3){$1$};
\draw [decorate,decoration={brace,mirror}]
  (-1.9,0) -- (1.9,0) node[midway,yshift=-1em]{\footnotesize{$l_{i+1}-\lfloor l_i\eta\rfloor-1$}};
\node[black] at(2.1,0.3){$0$};
\node [black] at(2.3,0.3){$0$};
\node [black] at(2.5,0.3){$0$};
\node[black] at(2.9,0.3){$\cdots$};
\node [black] at(3.3,0.3){$0$};
\node [black] at(3.5,0.3){$0$};
\node [red] at(3.7,0.3){$1$};
\draw [decorate,decoration={brace,mirror}]
  (2,0) -- (3.5,0) node[midway,yshift=-1em]{\footnotesize{$\lfloor l_{i+1}\eta\rfloor-l_{i+1}$}};
\node [black] at(3.9,0.3){$0$};
\node [black] at(4.1,0.3){$1$};
\node [black] at(4.3,0.3){$0$};
\node [black] at(4.5,0.3){$1$};
  \node[black] at(4.7,0.3){$0$};
\node [black] at(4.9,0.3){$1$};
\node [black] at(5.1,0.3){$0$};
\node [black] at(5.3,0.3){$1$};
\node [black] at(5.5,0.3){$0$};
\node [black] at(5.7,0.3){$1$};
\node [black] at(5.9,0.3){$0$};
\node [black] at(6.1,0.3){$1$};
\node [black] at(6.3,0.3){$0$};
\node[black] at(6.7,0.3){$\cdots$};
\node [black] at(7.1,0.3){$0$};
\node [black] at(7.3,0.3){$1$};
\node [black] at(7.5,0.3){$0$};
\node [black] at(7.7,0.3){$1$};
\node [black] at(7.9,0.3){$0$};
\node [black] at(8.1,0.3){$1$};
\node [black] at(8.3,0.3){$0$};
\node [black] at(8.5,0.3){$1$};
\draw [decorate,decoration={brace,mirror}]
  (3.8,0) -- (8.5,0) node[midway,yshift=-1em]{\footnotesize{$l_{i+2}-\lfloor l_{i+1}\eta\rfloor-1$}};
  \node[black] at(8.9,0.3){$\cdots$};
\end{tikzpicture}
\caption{The $b$-ary expansion of $y$ (here we take the example of $(l_{i+1}-\lfloor l_i\eta\rfloor-1)/2$ being an even number).}\label{prop27y}
\end{figure}
Propositions \ref{prop3} and \ref{prop1} will be proven in Section \ref{section7}.

\section{Preliminaries}\label{2}
\subsection{H\"older exponent of L\'evy functions and Diophantine approximation}\label{21}

Recall that the notions related to the $b$-adic approximation have been introduced in Definition \ref{def2}. 
It follows from \eqref{deltab}  that 
\begin{equation}
\Delta ^{b}(x-y)=\sup\left\{\alpha\geqslant1: \lVert b^n(x-y) \rVert \leqslant \frac{1}{b^{n(\alpha-1)}}{\rm\ for\ infinitely \ many}\ n\right\}.\label{dys2}
\end{equation}

The following proposition gives the H\"older exponent of the L\'evy function $L_{\alpha}^{b}$.


\begin{proposition}\cite[Proposition 4]{jaffard1997old}\label{lemma1}
The H\"older exponent of $L_{\alpha}^{b}$ is
\begin{equation}\label{ls} \nonumber
h_{L_{\alpha}^{b}}(x)= \frac{{\alpha}}{\Delta ^{b}(x)}.
\end{equation}
\end{proposition}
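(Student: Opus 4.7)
The plan is to establish two matching bounds: $h_{L_\alpha^b}(x) \leqslant \alpha/\Delta^b(x)$ from the jumps of the sawtooth at the $b$-adic rationals close to $x$, and $h_{L_\alpha^b}(x) \geqslant \alpha/\Delta^b(x)$ from the approximate smoothness of the series away from those rationals.

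For the upper bound, fix $\eta$ with $1 \leqslant \eta < \Delta^b(x)$. By \eqref{deltab} there exist infinitely many $n$ and integers $k_n$ with $|x - k_n/b^n| \leqslant b^{-n\eta}$. At each rational $k_n/b^n$, every term $\{b^i x\}/b^{\alpha i}$ with $i \geqslant n$ contributes a jump of magnitude $b^{-\alpha i}$, all with the same sign (since $\{t\}$ decreases by $1$ across each integer); hence the total jump of $L_\alpha^b$ at $k_n/b^n$ has magnitude at least $b^{-\alpha n}$. Taking $y_n$ on the opposite side of $k_n/b^n$ from $x$ at distance comparable to $b^{-n\eta}$ gives $|L_\alpha^b(x) - L_\alpha^b(y_n) - P(y_n - x)| \geqslant c\, b^{-\alpha n}$ for every polynomial $P$, since no polynomial absorbs a pure jump. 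Comparing with $|x - y_n| \lesssim b^{-n\eta}$ forces $h_{L_\alpha^b}(x) \leqslant \alpha/\eta$, and letting $\eta \uparrow \Delta^b(x)$ concludes.

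For the lower bound, fix $\eta > \Delta^b(x)$, so $\|b^i x\| > b^{-i(\eta-1)}$ for all $i$ sufficiently large. Given $y$ close to $x$, choose $n$ with $b^{-(n+1)} < |x - y| \leqslant b^{-n}$ and split the series into a head $\sum_{i \leqslant n}$ and a tail $\sum_{i > n}$. The tail is controlled by $2 \sum_{i > n} b^{-\alpha i} \lesssim |x-y|^\alpha$. For the head, write $\{b^i x\} - \{b^i y\} = b^i(x - y) - N_i$, where $N_i \in \mathbb{Z}$ counts, with sign, the integers strictly between $b^i x$ and $b^i y$; the affine contribution $(x-y)\sum_{i \leqslant n} b^{i(1-\alpha)}$ is absorbed into the polynomial $P(y-x)$ of Definition \ref{def}. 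A term $N_i$ is nonzero only if some rational $k/b^i$ lies between $x$ and $y$, which forces $\|b^i x\| \leqslant b^i |x - y| \leqslant b^{i - n}$; combined with the Diophantine lower bound this yields $i > n/\eta$. For such $i$ the interval $[x,y]$ of length $\leqslant b^{-n} \leqslant b^{-i}$ contains at most one such rational, so each problematic index contributes at most $b^{-\alpha i}$, and summing over $n/\eta < i \leqslant n$ produces a bound $\lesssim b^{-\alpha n/\eta} \lesssim |x - y|^{\alpha/\eta}$. This proves $h_{L_\alpha^b}(x) \geqslant \alpha/\eta$, and letting $\eta \downarrow \Delta^b(x)$ completes the argument.

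The main obstacle is the bookkeeping in the lower bound: one must correctly identify the affine pieces of the head as contributing to the Taylor polynomial $P$, and then invoke the Diophantine hypothesis at exactly the right step to show that the residual discontinuities inside $[x,y]$ come only from indices $i > n/\eta$. Everything else reduces to summing geometric series and using the explicit jump structure of the sawtooth at $b$-adic rationals. Boundary cases ($x$ itself $b$-adic, or $\Delta^b(x) = +\infty$) are handled directly, with the convention $\alpha/\infty = 0$ matching $h_{L_\alpha^b}(x) = 0$ at any discontinuity.
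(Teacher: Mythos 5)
The paper does not actually prove this proposition: it is quoted from Jaffard's earlier work on L\'evy/Davenport series, and your argument is essentially the standard proof of that result, so your overall strategy (jump-based upper bound; splitting $\{b^ix\}-\{b^iy\}$ into an affine part plus integer counts, and locating the nonzero counts via the Diophantine hypothesis, for the lower bound) is the right one and does go through. Two steps, however, are stated too loosely to stand as written.

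First, in the upper bound, comparing $L^b_\alpha(x)$ with a \emph{single} point $y_n$ on the far side of $k_n/b^n$ does not immediately give $|L^b_\alpha(x)-L^b_\alpha(y_n)-P(y_n-x)|\geqslant c\,b^{-\alpha n}$: the increment of $L^b_\alpha$ over the interval joining $x$ to $y_n$ equals the jump at $k_n/b^n$ \emph{plus} the variation of the series on the two complementary subintervals, and nothing a priori prevents the latter from cancelling the former (for $\alpha<1$ the ``continuous part'' of the partial sums has divergent slope, so it cannot simply be discarded). The standard fix is to take two sequences of test points tending to $k_n/b^n$ from either side, both within $O(b^{-n\eta})$ of $x$, and use the existence of one-sided limits: the polynomial difference and the residual variation then tend to $0$, only the jump of magnitude at least $b^{-\alpha n}$ survives, and one gets $b^{-\alpha n}\leqslant 2C\,b^{-n\eta\beta}$ for infinitely many $n$, hence $\beta\leqslant\alpha/\eta$.

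Second, in the lower bound you cannot ``absorb'' $(x-y)\sum_{i\leqslant n}b^{i(1-\alpha)}$ into the polynomial $P$, because $n$ depends on $y$ while $P$ must be a fixed polynomial, and moreover the constraint $\deg P<\beta$ forbids a linear term when $\alpha/\eta\leqslant 1$. The correct bookkeeping is a short case analysis: for $\alpha<1$ the partial sum is $O(b^{n(1-\alpha)})$, so the whole term is $O(|x-y|^{\alpha})$ and $P$ is a constant; for $\alpha>1$ replace the partial sum by the convergent full sum $S_\infty$ at the cost of an $O(|x-y|^{\alpha})$ error, keeping the linear term $-S_\infty(x-y)$ in $P$ only when $\alpha/\eta>1$ (otherwise it is itself $O(|x-y|^{\alpha/\eta})$); $\alpha=1$ produces a harmless logarithmic factor. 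With these two repairs, together with your (correct) deduction that $N_i\neq0$ forces $i>n/\eta$ and your treatment of the $b$-adic boundary case, the proof is complete.
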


On the other hand, observe that
\begin{equation} \label{2.5} 
	h_{L_{\alpha}^{b,y}}(x)=h_{L_{\alpha}^{b}}(x-y).
\end{equation}
So, by Proposition \ref{lemma1}, 
\begin{equation} \label{holderexponentoftranslatedfunction}
   h_{L_{\alpha}^{b,y}}(x) =\frac{{\alpha}}{\Delta ^{b}(x-y)}. 
\end{equation}

\begin{remark}
By Proposition \ref{lemma1} and \eqref{holderexponentoftranslatedfunction}, for any $[H_1,H_2]\in [0, {\alpha_1}]\times[0, {\alpha_2}]$, 
\begin{equation}\label{eqqq} 
\begin{aligned}
E_{L_{{\alpha_1}}^b,L_{\alpha_2}^{b,y}}(H_1,H_2)&=\left\{x \in \mathbb{R}:\Delta ^{b}(x) = \frac{{\alpha_1}}{H_1}\right\}\cap\left\{x \in \mathbb{R}: \Delta^{b}(x-y)=\frac{{\alpha_2}}{H_2}\right\}\\
&=E_{L_{\alpha_1}^{b}}(H_1)\cap E_{L_{\alpha_2}^{b}}(H_2).
\end{aligned}
\end{equation}
\end{remark}

The main issue of the paper is to calculate the Hausdorff dimension of the level set $E_{L_{{\alpha_1}}^b,L_{\alpha_2}^{b,y}}(H_1,H_2)$ which is by \eqref{eqqq} an intersection of two fractal sets. However, for each of the two sets in the intersection, its  Hausdorff dimension is already known. 
The univariate multifractal spectrum $\mathcal{D}_{L_{{\alpha}}^b}$, i.e., the Hausdorff dimension of the first fractal set in the intersection \eqref{eqqq} was obtained by S. Jaffard \cite{jaffard1997old}.
\begin{proposition}\cite[Proposition 4]{jaffard1997old}\label{lemma3}
The multifractal spectrum $\mathcal{D}_{L_{{\alpha}}^b}:H\mapsto \dim_H\left(E_{L_{\alpha}^{b}}(H)\right)$ of the L\'evy function $L_{\alpha}^{b}$ is given by
\begin{equation}\label{spec1} \nonumber
\begin{aligned}
\mathcal{D}_{L_{{\alpha}}^b}(H)=
\left\{
             \begin{array}{ll}
            \frac{H}{{\alpha}},&\quad {\rm if}\ H \in [0,{\alpha}],\vspace{1ex}\\
            -\infty,&\quad{\rm else}.
             \end{array}
\right.
\end{aligned}
\end{equation}
\end{proposition}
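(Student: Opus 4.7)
The plan is first to translate the level set of the Hölder exponent into a level set of the $b$-adic approximation exponent by Proposition \ref{lemma1}. Setting $\eta := \alpha/H$, one has
$$E_{L_\alpha^b}(H) = \{x \in \mathbb{R} : \Delta^b(x) = \eta\}.$$
Since $\Delta^b(x) \geqslant 1$ for every real $x$, this set is automatically empty when $\eta < 1$, i.e.\ when $H > \alpha$, which gives the ``else'' case. It remains to show that for $\eta \geqslant 1$ the Hausdorff dimension equals $1/\eta = H/\alpha$. The endpoint $\eta = 1$ (i.e.\ $H = \alpha$) is trivial, since $\{\Delta^b = 1\}$ has full Lebesgue measure in every bounded interval, so the dimension equals $1$; the endpoint $H = 0$ is also easy since the dimension is nonnegative. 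The substantive range is $1 < \eta < \infty$.

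For the upper bound I would exploit that $E_{L_\alpha^b}(H) \subseteq \{x : \Delta^b(x) \geqslant \eta\}$, and that for any $\eta' < \eta$ the latter is contained in the limsup set
$$\bigcap_{N \geqslant 1} \bigcup_{n \geqslant N} \bigcup_{k \in \mathbb{Z}} \left[\tfrac{k}{b^n} - b^{-n\eta'},\ \tfrac{k}{b^n} + b^{-n\eta'}\right].$$
Restricting to a bounded interval, for each $n$ this is covered by $O(b^n)$ intervals of length $2 b^{-n\eta'}$. For any $s > 1/\eta'$ the sum $\sum_{n \geqslant N} b^n (2 b^{-n\eta'})^s$ tends to $0$ as $N \to \infty$, so $\mathcal{H}^s = 0$ and $\dim_H \leqslant 1/\eta'$. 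Letting $\eta' \nearrow \eta$ yields $\dim_H E_{L_\alpha^b}(H) \leqslant 1/\eta$.

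The main obstacle is the matching lower bound, because we must restrict to points whose approximation exponent equals $\eta$ \emph{exactly}, not merely $\geqslant \eta$. The strategy is to build a Cantor-type subset $K \subseteq E_{L_\alpha^b}(H)$ together with a natural probability measure $\mu$ on $K$, and then apply the mass distribution principle. Concretely, I would fix a very sparse sequence $n_1 < n_2 < \cdots$ with $n_{j+1}/n_j \to \infty$, and prescribe the $b$-ary expansion of each $x \in K$ so that $\|b^{n_j} x\|$ is of exact order $b^{-n_j(\eta - 1)}$ at each ``active'' scale $n_j$, while leaving the digits in each block between consecutive $n_j$'s free (for instance, uniformly random over $\{0,1,\dots,b-1\}$). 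One then verifies: (i) at all scales $n$ far from every $n_j$, the choice of free digits forces $\|b^n x\|$ to be bounded below by $b^{-n(\eta + o(1) - 1)}$, so $\Delta^b(x) = \eta$ rather than exceeding it; and (ii) the product measure $\mu$ on the free digits satisfies $\mu(B(x,r)) \leqslant C_\varepsilon r^{1/\eta - \varepsilon}$ for arbitrarily small $\varepsilon > 0$. Applying the mass distribution principle and letting $\varepsilon \to 0$ gives $\dim_H K \geqslant 1/\eta$. The delicate points are the sparsity of $\{n_j\}$ (fast enough that distinct approximation constraints do not reinforce each other at intermediate scales) and the flatness of $\mu$ on the free blocks (so that $\mu$ has nearly maximal local scaling exponent); these are classical Jarnik--Besicovitch type ingredients but require careful bookkeeping.
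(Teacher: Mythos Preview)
The paper does not prove this proposition; it is quoted from \cite{jaffard1997old} and used as an input. So there is no ``paper's own proof'' to compare against directly. That said, your outline is the standard Jarn\'ik--Besicovitch argument and is essentially the univariate specialisation of what the paper carries out in Section~\ref{3} for the bivariate spectrum: translate to $\Delta^b$ via Proposition~\ref{lemma1}, cover the limsup set for the upper bound, and build a Cantor subset with a mass distribution for the lower bound (the paper uses Billingsley's Lemma~\ref{billings} rather than the generic mass distribution principle, but this is the same mechanism on $b$-adic cylinders).

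One point deserves tightening. You write that leaving the digits between consecutive $n_j$ \emph{free} ``forces'' $\|b^n x\|$ to be bounded below at the right rate. As stated this is not true: a set defined by completely free intermediate digits contains points with arbitrarily long runs of $0$'s or $(b-1)$'s, hence with $\Delta^b(x)>\eta$, so $K\not\subseteq E_{L_\alpha^b}(H)$. There are two clean fixes. Either (a) keep the product measure $\mu$ and argue via Borel--Cantelli that $\mu$-a.e.\ $x$ has no such long runs, then apply Billingsley's lemma to the full-measure subset on which $\Delta^b(x)=\eta$; or (b) do what the paper does in its bivariate construction: insert short deterministic ``barriers'' (e.g.\ a pattern $01$) at spacing $\lfloor n_j(\eta-1)\rfloor$ inside each free block, so that every $x\in K$ genuinely has $\Delta^b(x)=\eta$. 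The barriers are sparse enough (density $O(1/n_j)$ among positions up to $n_{j+1}$, under your hypothesis $n_{j+1}/n_j\to\infty$) that they do not affect the local dimension computation. With either fix your argument goes through.
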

Noting that by \eqref{2.5}, for any given $y\in \mathbb{R}$, we have  $E_{L_{\alpha}^{b,y}}=E_{L_{\alpha}^{b}}+ y $, it follows directly that the univariate multifractal spectrum $\mathcal{D}_{L_{{\alpha}}^{b,y}}$, i.e., the Hausdorff dimension of the second fractal set in the intersection \eqref{eqqq} is:
\begin{equation}\label{spec2} \nonumber
\begin{aligned}
\mathcal{D}_{L_{\alpha}^{b,y}}(H)=
\left\{
             \begin{array}{ll}
            \frac{H}{{\alpha}},&\quad {\rm if}\ H \in [0,{\alpha}],\vspace{1ex}\\
            -\infty&\quad{\rm else}.
             \end{array}
\right.
\end{aligned}
\end{equation}

\smallskip
The following Billingsley's lemma is useful for calculating Hausdorff dimension. Let $\mu$ be a finite Borel measure on $[0,1]$. For $x\in [0,1]$, let $I_n(x)$ be the $n$-th generation, half-open $b$-adic interval of the form $[{\frac{j-1}{b^n}}, \frac{j}{b^n})$ containing $x$. The lower local pointwise dimension $\underline{d}_{\mu}(x)$ at $x$ is defined by
\begin{equation} \label{lpd} 
\underline{d}_{\mu}(x)=\mathop{\liminf}\limits_{n\rightarrow \infty}\frac{\log\mu\big(I_n(x)\big)}{\log |I_n(x)|}.
\end{equation}

\begin{lemma}(Billingsley's lemma, \cite[p.17]{Bishop-Peres})\label{billings}
Let $E$ be a Borel subset of $[0,1]$ and $\mu$ a finite Borel measure on $[0,1]$.
\\(1) If $\underline{d}_{\mu}(x) \geqslant s$ for all $\mu$-almost all $x \in E$, and $\mu(E) >0$, then $\dim_H(E)\geqslant s$.
\\(2) If $\underline{d}_{\mu}(x) \leqslant s$ for all $x \in E$, then $\dim_H(E)\leqslant s$.
\end{lemma}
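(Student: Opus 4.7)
The plan is to prove both halves of Billingsley's lemma via the mass distribution principle, exploiting the fact that any two $b$-adic intervals are either disjoint or one is contained in the other.

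For part (1), I would fix any $t<s$ and introduce the sets
\[ E_N := \bigl\{x \in E : \mu(I_n(x)) \leq |I_n(x)|^t \text{ for every } n \geq N\bigr\}. \]
The hypothesis $\underline{d}_\mu(x) \geq s>t$ for $\mu$-almost every $x\in E$ implies that $E_N \uparrow E$ modulo a $\mu$-null set, so one can fix $N_0$ with $\mu(E_{N_0})>0$. Given any cover of $E_{N_0}$ by sets $\{U_j\}$ of diameter less than $b^{-N_0}$, each $U_j$ is covered by at most two $b$-adic intervals $I^{(1)}_j, I^{(2)}_j$ of length at most $b|U_j|$. For each $I^{(k)}_j$ meeting $E_{N_0}$, the defining inequality of $E_{N_0}$ gives $\mu(I^{(k)}_j) \leq |I^{(k)}_j|^t \leq (b|U_j|)^t$. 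Summing,
\[ \mu(E_{N_0}) \leq \sum_j \bigl(\mu(I^{(1)}_j)+\mu(I^{(2)}_j)\bigr) \leq 2b^t \sum_j |U_j|^t, \]
so $\mathcal{H}^t(E_{N_0})>0$, giving $\dim_H E \geq \dim_H E_{N_0} \geq t$; letting $t\uparrow s$ concludes.

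For part (2), fix any $t>s$. For every $x\in E$, the hypothesis $\underline{d}_\mu(x) \leq s<t$ furnishes infinitely many $n$ with $\mu(I_n(x))\geq |I_n(x)|^t$. Fixing $\delta>0$ and keeping only those $n$ with $b^{-n}<\delta$, the resulting family $\mathcal{F}_\delta$ of $b$-adic intervals still covers $E$. By the nested-or-disjoint dichotomy, the maximal elements of $\mathcal{F}_\delta$ form a disjoint subfamily $\{I_k\}$ still covering $E$ (each element of $\mathcal{F}_\delta$ sits in a nested chain of lengths bounded by $\delta$, hence in some maximal element), and
\[ \sum_k |I_k|^t \leq \sum_k \mu(I_k) \leq \mu([0,1]) < \infty. \]
Thus $\mathcal{H}^t_\delta(E)\leq \mu([0,1])$ for every $\delta>0$, so $\mathcal{H}^t(E)<\infty$ and $\dim_H E \leq t$; letting $t\downarrow s$ completes the proof.

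The delicate ingredient is the Vitali-type selection in part (2): one must extract a disjoint subcover at arbitrarily small scales from a fine cover furnished only pointwise. The $b$-adic structure trivializes this (maximal elements are automatically disjoint, and every element is dominated by a maximal one), which is precisely why Billingsley's lemma is formulated with $b$-adic intervals rather than general balls or sets. In part (1), the only care needed is the standard passage from an arbitrary cover to a $b$-adic one, which costs only a bounded constant factor $2b^t$.
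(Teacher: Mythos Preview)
Your proof is correct and follows the standard argument for Billingsley's lemma. However, the paper does not actually prove this statement: it is quoted as Lemma~\ref{billings} with a citation to Bishop--Peres and used as a black box, so there is no proof in the paper to compare against.

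A few minor remarks on your write-up. In part~(1), when you say ``each $U_j$ is covered by at most two $b$-adic intervals of length at most $b|U_j|$'', you should check that the level $n$ of those intervals satisfies $n\geq N_0$; this follows since $|U_j|<b^{-N_0}$ forces $b^{-(n+1)}\le |U_j|<b^{-N_0}$ and hence $n\ge N_0$, but it is worth stating. In part~(2), the existence of maximal elements in $\mathcal{F}_\delta$ is guaranteed because the lengths of intervals in $\mathcal{F}_\delta$ are bounded above by $\delta$, so any ascending chain of nested $b$-adic intervals terminates; you hint at this but could make it explicit. Neither point is a gap --- your argument is complete as written.
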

\subsection{\texorpdfstring{Symbolic space and $b$-}\ ary expansion }\label{subsection2.2}
Since L\'evy functions are  1-periodic, they can be considered as functions defined on the interval $[0,1)$. The level sets $E_{L_{{\alpha_1}}^b,L_{\alpha_2}^{b,y}}(H_1,H_2)$ will also be considered as their intersection with $[0,1)$.

Denote $\mathcal{A}:=\{0,1,\cdots,b-1\}$. Denote by $\mathcal{A}^n$ the set of all words of length $n$ over $\mathcal{A}$, and let $\mathcal{A}^*=\bigcup\limits_{n=1}^\infty\mathcal{A}^n$ be the set of all finite words over $\mathcal{A}$. 
 Every number $x$ in $[0,1)$ admits a $b$-ary expansion
\begin{equation} \nonumber
x=\sum\limits_{i=1}^{\infty} \frac{\varepsilon_i}{b^i},\quad \text{with} \  \varepsilon_i\ \in \mathcal{A}.
\end{equation}
Then a number $x \in [0,1)$ is associated with an infinite word $\varepsilon=\varepsilon_1\varepsilon_2\cdots$ which is also referred to as the $b$-ary expansion of $x$. In fact, there is a projection map $\Pi:\mathcal{A}^{\mathbb{N}}\longmapsto[0,1)$ defined by 
\begin{equation}\nonumber
\Pi(\varepsilon)=\sum\limits_{i=1}^{\infty} \frac{\varepsilon_i}{b^i},\quad \forall \varepsilon=\varepsilon_1\varepsilon_2\cdots\in\mathcal{A}^{\mathbb{N}}.
\end{equation}
The map $\Pi$ is one-to-one except for the $b$-adic rationals (where it is two-to-one). 

If $x$ is a $b$-adic rational number, denoted by $x=\frac{k}{b^m}$, $(\text{where}\ \gcd(k,b)=1)$, then the $b$-ary expansion $\varepsilon_1\varepsilon_2\cdots$ of $\frac{k}{b^m}$ satisfies one of the following two conditions, 
\begin{itemize}
\item $\varepsilon_m= a,\ \varepsilon_k=0,\ \forall k>m;$ 
\item
$\varepsilon_m = a-1,\ \varepsilon_k=b-1,\ \forall k>m, $
\end{itemize}
where $a\in \mathcal{A}\backslash\{0\}$.
In this paper, the $b$-ary expansion of the $b$-adic rational number $\frac{k}{b^m}$ is the one which satisfies $\varepsilon_m=a$, $\varepsilon_k=0$ for all $k>m$. 

\smallskip
To study the approximation properties in \eqref{deltab} and \eqref{dys2}, it is more convenient to work on the space $\Sigma:=\mathcal{A}^{\mathbb{N}}$ of infinite words. The space $\Sigma$ is naturally equipped with a metric defined by
\begin{equation} \nonumber
\forall \varepsilon, \varepsilon' \in \Sigma, \ d(\varepsilon,\varepsilon')=b^{-\min\{n\geqslant 0:\ \varepsilon_{n+1}\neq\varepsilon'_{n+1}\}}.
\end{equation}
Define the left shift $\sigma$ on $\Sigma$ by
\begin{equation} \nonumber
\sigma: \varepsilon_1\varepsilon_2\cdots\longmapsto\varepsilon_2\varepsilon_3\cdots.
\end{equation}
If the infinite word associated with $x$ is $\varepsilon$, then for any $n$, the infinite word associated with $b^nx$ is $\sigma^n(\varepsilon)$.  For any $a \in \mathcal{A}$, $k \in \mathbb{N}$, denote $a^k=\underbrace{aa\cdots a}_{k}$ and $a^\infty=aa\cdots$. Then, 
\begin{equation} \label{shun0} \nonumber
    \frac{1}{b}d\big(\sigma^{n}(\varepsilon),\varepsilon^*\big) \leqslant \|b^nx\|\leqslant d\big(\sigma^{n}(\varepsilon),\widetilde{\varepsilon}\big),
\end{equation}
where $\varepsilon^*,\ \widetilde{\varepsilon}=0^\infty $ or $(b-1)^\infty$. As a consequence, if $\sigma^n(\varepsilon)$ is followed by a word $\varepsilon_{n+1}\varepsilon_{n+2}\cdots$, where
\begin{equation}
\begin{aligned}
    \nonumber	
    &\varepsilon_{n+1}\varepsilon_{n+2}\cdots\varepsilon_{n+m}=0^{m},\\
    &\varepsilon_{n+m+1}\neq 0,
\end{aligned}
\end{equation}
or
\begin{equation}
\begin{aligned}
    \nonumber	
    &\varepsilon_{n+1}\varepsilon_{n+2}\cdots\varepsilon_{n+m}=(b-1)^{m},\\
    &\varepsilon_{n+m+1}\neq b-1,
\end{aligned}
\end{equation}
then
\begin{equation} \label{2.13}
	\frac{1}{b^{m+1}}\leqslant	\lVert b^{n}x\rVert \leqslant \frac{1}{b^{m}}.
\end{equation}
 Let $\theta=\theta_1\theta_2\cdots$ be the $b$-ary expansion of $y$. If 
\begin{equation}
\begin{aligned}
    \nonumber	
    &\varepsilon_{n+1}\varepsilon_{n+2}\cdots\varepsilon_{n+m}=\theta_{n+1}\theta_{n+2}\cdots\theta_{n+m},\\
    &\varepsilon_{n+m+1}\neq \theta_{n+m+1},
\end{aligned}
\end{equation}
then
\begin{equation} \label{2.13y}
	\frac{1}{b^{m+1}}=\frac{1}{b}d\big(\sigma^{n}(\varepsilon),\sigma^{n}(\theta)\big) \leqslant	\lVert b^{n}(x-y)\rVert \leqslant d\big(\sigma^{n}(\varepsilon),\sigma^{n}(\theta)\big)=\frac{1}{b^{m}}.
\end{equation}

For $\varepsilon_1\varepsilon_2\cdots\varepsilon_n \in \mathcal{A}^n$, denote by $I_n(\varepsilon_1\varepsilon_2\cdots\varepsilon_n)$ the cylinder of order $n$ composed of real numbers in $[0,1)$ whose $b$-ary expansions start with $\varepsilon_1\varepsilon_2\cdots\varepsilon_n$. Then, $I_n(\varepsilon_1\varepsilon_2\cdots\varepsilon_n)$ is an interval of length $b^{-n}$.
\section{Multivariate multifractal spectra of L\'evy functions}\label{3}
In this section, we prove Theorem \ref{th} which gives the multivariate multifractal spectra of a L\'evy function and its translations.

The upper bound of the spectrum in Theorem \ref{th} follows  from straightforward observations. The main effort in the proof lies in establishing the lower bound. The strategy for proving the lower bound proceeds as follows. We begin by constructing a Cantor type subset of the level set $ E_{L_{{\alpha_1}}^b,L_{\alpha_2}^{b,y}}(H_1,H_2) $ for any $ (H_1, H_2) \in [0, {\alpha_1}] \times [0, {\alpha_2}] $ (Subsections \ref{Construct-Cantor} and \ref{subsets-E}). Then, we will define a measure on the Cantor type subset and estimate its lower local dimensions (Subsection \ref{mass-local-dim}). Finally, we obtain the lower bound of the Hausdorff dimension of the Cantor type subset using Billingsley's lemma (Subsection \ref{proof-main-th}). Such lower bound is nothing but the wanted lower bound of the spectrum. 

\subsection{Construction of Cantor type subsets}\label{Construct-Cantor}

In this subsection, given data $\alpha_1, \alpha_2\geqslant 0$, $(H_1, H_2)\in [0, {\alpha_1}] \times [0, {\alpha_2}]$ and $y\in [0,1]$, we construct a Cantor type subset of $ E_{L_{{\alpha_1}}^b,L_{\alpha_2}^{b,y}}(H_1,H_2)$.   

Suppose $y \in[0,1]$ has $b$-ary expansion  $\theta=\theta_1\theta_2\cdots$. Let $\{n_k\}_{k\geqslant0}$ be a sequence of integers satisfying
\begin{equation}\label{314}
	\begin{aligned}
		\frac{k\cdot n_{k-1}}{n_{k}}\rightarrow 0,\ (k\rightarrow \infty).
	\end{aligned}
\end{equation}
For any $k \geqslant 0$, let 
\begin{equation} \label{313}
  \begin{aligned}
  &m_{k} = \left\lfloor n_{k}\left(\frac{{\alpha_1}}{H_1}-1\right)\right\rfloor,\\
  &m'_{k} = \left\lfloor n_{k}\left(\frac{{\alpha_2}}{H_2}-1\right)\right\rfloor,
  \end{aligned}
\end{equation}
and 
\begin{equation}\nonumber
    m^*_{k}=\min\{m_{k},m'_{k}\}.
\end{equation}
If $n_0\leqslant m^*_{0}$, then define
\begin{equation}
    \begin{aligned}\nonumber
        \mathcal{W}_{0}=\left\{	\varepsilon_{1}\varepsilon_{2}\cdots\varepsilon_{n_{0}}\in\mathcal{A}^*:\ \varepsilon_{n_{0}-1}=0\ \text{and}\ \varepsilon_{n_{0}}=1 \right\}.
    \end{aligned}
\end{equation}
If $n_0 > m^*_{0}$, then let
\begin{equation} \label{r0}
    r_{0}=\left\lfloor \frac{n_{0}-2}{ m^*_{0}}\right\rfloor,
\end{equation}
and define
\begin{equation}\nonumber
\begin{aligned}
\mathcal{W}_{0}=\Big\{
		\varepsilon_{1}\varepsilon_{2}\cdots\varepsilon_{n_{0}}\in\mathcal{A}^*:\ &\varepsilon_j=0 \ \text{for}\ j= n_{0}-1,\ \text{and}\  j=tm^*_{0}-1,\ 1\leqslant t \leqslant r_{0};\\ &\varepsilon_j=1\ \text{for}\  j= n_{0}, \ \text{and}\  j=tm^*_{0}\ 1\leqslant t \leqslant r_{0}  \Big\}.
\end{aligned}
\end{equation}
 Note that $2$ is subtracted  in \eqref{r0} to avoid the possibility of doubly defining the values of $\varepsilon_{n_0-1}$ and $\varepsilon_{n_0}$ in $\mathcal{W}_{0}$. 

By \eqref{314} and \eqref{313}, without loss of generality, assume that
\begin{equation}\label{315}
	\begin{aligned}
		n_{2k+1}-n_{2k}-m_{2k}-8 > 0,
	\end{aligned}
\end{equation}
\begin{equation}\label{316}
	\begin{aligned}
		n_{2k+2}-n_{2k+1}-m'_{2k+1}-9 > 0.
	\end{aligned}
\end{equation}
Then, let 
\begin{equation}\label{317}\nonumber
\ r_{2k}=\left\lfloor \frac{n_{2k+1}-n_{2k}-m_{2k}-8}{m^*_{2k}}\right\rfloor,
\end{equation}
\begin{equation} \label{318}\nonumber
    r_{2k+1}=\left\lfloor \frac{n_{2k+2}-n_{2k+1}-m'_{2k+1}-9}{m^*_{2k+1}}\right\rfloor.
\end{equation}
For any $k\geqslant 0$, define
\begin{equation}\label{w2k}\nonumber
\begin{aligned}
\mathcal{W}_{2k+1}=\Big\{
		\varepsilon_{1}\varepsilon_{2}\cdots\varepsilon_{n_{2k+1}-n_{2k}}\in\mathcal{A}^*:\ &\varepsilon_j=0 \ \text{for}\ 1\leqslant j \leqslant m_{2k},\ j= n_{2k+1}-n_{2k}-2,\\&\ \ \ \ \ \ \ \ \ \text{and}\  j=m_{2k}+tm^*_{2k},\ 1\leqslant t \leqslant r_{2k};\\ &\varepsilon_j = 1\ \text{for}\  j=m_{2k}+tm^*_{2k}+1,\  0\leqslant t \leqslant r_{2k},\\&\ \ \ \ \ \ \ \ \ \text{and}\ j= n_{2k+1}-n_{2k}-1;
		\\&\varepsilon_j\neq\theta_{n_{2k}+j}\ \text{for}\  j=m_{2k}+tm^*_{2k}+2,
	\ 0\leqslant t\leqslant r_{2k},\\&\ \ \ \ \ \ \ \ \ \ \quad\quad\text{and}\ j= n_{2k+1}-n_{2k}  \Big\},
\end{aligned}
\end{equation}
and
\begin{equation}\label{w2k+1}\nonumber
	\begin{aligned}
		\mathcal{W}_{2k+2}=\Big\{
				\varepsilon_{1}\varepsilon_{2}\cdots\varepsilon_{n_{2k+2}-n_{2k+1}}\in\mathcal{A}^*: \ &\varepsilon_j=\theta_{n_{2k+1}+j} \ \text{for}\ 1\leqslant j \leqslant m'_{2k+1};\\& \varepsilon_j\neq\theta_{n_{2k+1}+j} \ \text{for}\ j=m'_{2k+1}+tm^*_{2k+1},\ 0\leqslant t \leqslant r_{2k+1},\\&\ \ \ \ \ \ \ \ \ \ \ \ \quad \quad\text{and}\  j= n_{2k+2}-n_{2k+1}-2;
				\\&\varepsilon_j=0 \ \text{for}\  j=m'_{2k+1}+tm^*_{2k+1}+1,\ 0\leqslant t\leqslant r_{2k+1},\\&\ \ \ \ \ \ \ \ \ \ \text{and}\ j= n_{2k+2}-n_{2k+1}-1;
				\\&\varepsilon_j = 1\  \text{for}\  j=m'_{2k+1}+tm^*_{2k+1}+2,\ 0\leqslant t\leqslant r_{2k+1},\\&\ \ \ \ \ \ \ \ \ \ \text{and}\ j= n_{2k+2}-n_{2k+1} \Big\}.
		\end{aligned}
	\end{equation}
	
 For better illustration,  the notation $\varepsilon_i = \overline\theta_i$ is used whenever $\varepsilon_i \neq \theta_i$. Hence, the words in $\mathcal{W}_{2k+1}$ begin with $0^{m_{2k}}1\overline\theta_{n_{2k}+m_{2k}+2}$, contain a subword $01\overline\theta_{n_{2k}+m_{2k}+tm^*_{2k}+2}$ at the positions $m_{2k}+tm^*_{2k}$, and end with $01\overline\theta_{n_{2k+1}}$. Similarly, the words in $\mathcal{W}_{2k+2}$ begin with $\theta_{n_{2k+1}+1}\cdots\theta_{n_{2k+1}+m'_{2k+1}}\overline\theta_{n_{2k+1}+m'_{2k+1}+1}01$, contain a subword $\overline\theta_{n_{2k+1}+m'_{2k+1}+tm^*_{2k+1}}01$ at the positions $m_{2k+1}+tm^*_{2k+1}$, and end with $\overline\theta_{n_{2k+2}-2}01$. See Figures \ref{fig:M1} and \ref{fig:M2} for illustrations.
\begin{figure}[H]
\centering
\begin{scaletikzpicturetowidth}{\textwidth}
\begin{tikzpicture}[scale=\tikzscale]
\draw  [black](-2,0)--(13,0);
\draw  [black](-2,0)--(-2,0.3);
\draw  [black](13,0)--(13,0.3);
\filldraw [black] (1.5,0) circle (2pt);
\node (b) at(1.5,-0.5){$\varepsilon_{m_{2k}}$};
\filldraw [black] (5.5,0) circle (2pt);
\node (c) at(5.5,-0.5){$\varepsilon_{m_{2k}+m^*_{2k}}$};
\node[black] at(10,0.3){$\cdots$};
\filldraw [black] (13,0) circle (2pt);
\node (z) at(13,-0.5){$\varepsilon_{n_{2k+1}-n_{2k}}$};
\node[black] (d) at(-1.9,0.3){$0$};
\node[black]  (e) at(-1.7,0.3){$0$};
\node[black]  (f) at(-1.5,0.3){$0$};
\node[black]  (h) at(-1.3,0.3){$0$};
\node[black]  (i) at(-1.1,0.3){$0$};
\node[black] (g) at(-0.7,0.3){$\cdots$};
\node[black]  (1) at(-0.3,0.3){$0$};
\node[black]  (2) at(-0.1,0.3){$0$};
\node[black]  (3) at(0.1,0.3){$0$};
\node[black]  (4) at(0.3,0.3){$0$};
\node[black]  (4) at(0.5,0.3){$0$};
\node[black]  (4) at(0.7,0.3){$0$};
\node[black]  (4) at(0.9,0.3){$0$};
\node[black]  (4) at(1.1,0.3){$0$};
\node[black]  (4) at(1.3,0.3){$0$};
\node[red] (j) at(1.5,0.3){$1$};
\node[red] (k) at(2.6,0.3){$\overline\theta_{n_{2k}+m_{2k}+2}$};
\node [red](0) at(5.3,0.3){$0$};
\node[red] (p) at(5.5,0.3){$1$};
\node[red] (q) at(7,0.3){$\overline\theta_{n_{2k}+m_{2k}+m^*_{2k}+2}$};

\node[red] (0) at(12.6,0.3){$0$};
\node[red] (p) at(12.8,0.3){$1$};
\node[red] (q) at(13.5,0.3){$\overline\theta_{n_{2k}+1}$};
\end{tikzpicture}
\end{scaletikzpicturetowidth}
\caption{The construction of $\mathcal{W}_{2k+1}$.} \label{fig:M1}
\end{figure}
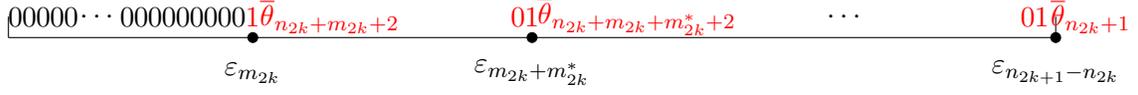
\begin{figure}[H]
\centering
\begin{scaletikzpicturetowidth}{\textwidth}
\begin{tikzpicture}[scale=\tikzscale]
\draw  [black](-2,0)--(13,0);
\draw  [black](-2,0)--(-2,0.3);
\draw  [black](13,0)--(13,0.3);
\filldraw [black] (2.3,0) circle (2pt);
\node (a) at(2.3,-0.5){$\varepsilon_{m_{2k+1}}$};
\filldraw [black] (6.6,0) circle (2pt);
\node (b) at(6.6,-0.5){$\varepsilon_{m_{2k+1}+m^*_{2k+1}}$};
\node[black] at(10.7,0.3){$\cdots$};
\filldraw [black] (13,0) circle (2pt);
\node (z) at(13,-0.5){$\varepsilon_{n_{2k+2}-n_{2k+1}}$};
\node[black] (d) at(-1.25,0.3){$\theta_{n_{2k+1}+1}$};
\node[black] (g) at(-0.3,0.3){$\cdots$};
\node[black]  (4) at(1,0.3){$\theta_{n_{2k+1}+m'_{2k+1}}$};
\node[red] (k) at(3.5,0.3){$\overline\theta_{n_{2k+1}+m'_{2k+1}+1}$};
\node[red] (l) at(4.9,0.3){$0$};
\node[red] (m) at(5.1,0.3){$1$};
\node[red] (k) at(8.2,0.3){$\overline\theta_{n_{2k+1}+m'_{2k+1}+m^*_{2k+1}}$};
\node[red] (l) at(10,0.3){$0$};
\node[red] (m) at(10.2,0.3){$1$};
\node[red] (k) at(12,0.3){$\overline\theta_{n_{2k+2}-2}$};
\node[red] (l) at(12.8,0.3){$0$};
\node[red] (m) at(13,0.3){$1$};
\end{tikzpicture}
\end{scaletikzpicturetowidth}
\caption{The construction of $\mathcal{W}_{2k+2}$. } \label{fig:M2}
\end{figure}
Notice that $8$ is subtracted in \eqref{315}, to well define $\mathcal{W}_{2k+1}$. Precisely, the following $8$ values should be defined only once: 
\begin{equation}
    \begin{aligned}
        \nonumber
        &\varepsilon_{m_{2k}+1}= 1,
        \\&\varepsilon_{m_{2k}+2}\neq\theta_{m_{2k}+2},
        \\&\varepsilon_{m_{2k}+r_{2k}m^*_{2k}}=0,
        \\&\varepsilon_{m_{2k}+r_{2k}m^*_{2k}+1}=1,
    \\&\varepsilon_{m_{2k}+r_{2k}m^*_{2k}+2}\neq\theta_{m_{2k}+r_{2k}m^*_{2k}+2},
    \\&\varepsilon_{n_{2k+1}-n_{2k}-2}=0,
    \\&\varepsilon_{n_{2k+1}-n_{2k}-1}=1,
    \\&\varepsilon_{n_{2k+1}-n_{2k}}\neq\theta_{n_{2k+1}-n_{2k}}.
    \end{aligned}
\end{equation}
Similarly, we subtract $9$ in \eqref{316}.

Take
\begin{equation}\nonumber
\begin{aligned}
\mathcal{U}_k=\{w_0w_1\cdots w_k:w_j\in \mathcal{W}_{j},\ 0\leqslant j\leqslant k\}.
\end{aligned}
\end{equation}
Our Cantor type subset is defined as
\begin{equation} \label{cantorsetE}
\begin{aligned}
E&:=\mathop{\bigcap}\limits_{k=0}^{\infty}
\mathop\bigcup_{
u \in \mathcal{U}_{k}}
I_{n_{k+1}}(u).
\end{aligned}
\end{equation}

\subsection{Subsets of \texorpdfstring{$E_{L_{{\alpha_1}}^b,L_{\alpha_2}^{b,y}}(H_1,H_2)$}{Lg}}\label{subsets-E}

In this subsection, we will prove that $E$ is a subset of $E_{L_{{\alpha_1}}^b,L_{\alpha_2}^{b,y}}(H_1,H_2)$. To this end, for any $x \in E$, we estimate $\lVert b^{n}x \rVert$ and $\lVert b^{n}(x-y) \rVert$ respectively. 

Note that for $x\in E$ with $\varepsilon=\varepsilon_{1}\varepsilon_{2}\varepsilon_{3}\cdots$ being its $b$-ary expansion, it holds that
\begin{equation} \nonumber
	\varepsilon=w_0w_1\cdots
\end{equation}
with $w_j\in \mathcal{W}_{j}$ for all $j\geqslant 0$.
\begin{itemize} 
\item The estimation for $\lVert b^{n}x \rVert$.
\begin{itemize} 
\item If $n=n_{2k}\ (k\geqslant 0),$ then by the definition of $\mathcal{W}_{2k+1}$,\ $\varepsilon_{n_{2k}}$ is followed by $0^{m_{2k}}1$. Hence, according to \eqref{2.13}, it follows that,
\begin{equation}\label{31} 
\frac{1}{b^{n_{2k}\big(\frac{{\alpha_1}}{H_1}-1\big)+1}} \leqslant \frac{1}{b^{m_{2k}+1}}\leqslant \lVert b^{n_{2k}}x \rVert \leqslant \frac{1}{b^{m_{2k}}}\leqslant \frac{1}{b^{n_{2k}\big(\frac{{\alpha_1}}{H_1}-1\big)-1}}.
\end{equation}
By \eqref{deltab}, it immediately follows that
\begin{equation} \label{3.7}
	\Delta^{b}(x)\geqslant \frac{{\alpha_1}}{H_1}.
\end{equation}
\item Let $n\in [n_{k},n_{k+1}]\ (k\geqslant 0)$. Note that $(H_1, H_2)\in K_{\alpha_1, \alpha_2}(y)$. Hence, $H_1\leqslant \frac{\alpha_1}{\Delta^b(y)}$. That is,
\[\Delta^b(y)\leqslant \frac{\alpha_1}{H_1}.\]
Thus, the prefix of the words in  $\mathcal{W}_{2k+2}$ can not contain a word of $0$'s or a word of $(b-1)$'s of length larger than $m_k$. Then, by the definitions of
$\mathcal{W}_{2k+1}$ and $\mathcal{W}_{2k+2}$, 
$\varepsilon_n$ can not be followed by a word of $0$'s or a word of $(b-1)$'s of length larger than $m_k$. 
Moreover, by $\eqref{2.13}$,
\begin{equation}\label{34} 
	\begin{aligned}
		&\lVert b^{n}x \rVert \geqslant \frac{1}{b^{m_k}} \geqslant
		\frac{1}{b^{n_{k}\big(\frac{{\alpha_1}}{H_1}-1\big)}} > \frac{1}{b^{n\big(\frac{{\alpha_1}}{H_1}-1\big)}}.
	\end{aligned}
\end{equation}
Thus, 
\begin{equation}
\begin{aligned} \label{3.9}
\Delta^{b}(x) \leqslant \frac{{\alpha_1}}{H_1}.
\end{aligned}
\end{equation}
\end{itemize}
Combining \eqref{3.7} and \eqref {3.9} gives
\begin{equation} \label{lls1}
	\begin{aligned}
	\Delta^{b}(x)= \frac{{\alpha_1}}{H_1}.
	\end{aligned}
\end{equation}
\item The estimation for $\lVert b^{n}(x-y) \rVert$.
\begin{itemize}
    \item If $n=n_{2k+1}\ (k\geqslant 0)$, then by the definition of $\mathcal{W}_{2k+2}$, 
\begin{equation}\label{33} 
\frac{1}{b^{n_{2k+1}\big(\frac{{\alpha_2}}{H_2}-1\big)+1}} \leqslant \frac{1}{b^{m'_{2k+1}+1}}\leqslant \lVert b^{n_{2k+1}}(x-y) \rVert \leqslant \frac{1}{b^{m'_{2k+1}}}\leqslant \frac{1}{b^{n_{2k+1}\big(\frac{{\alpha_2}}{H_2}-1\big)-1}}.
\end{equation}
Based on \eqref{dys2}, it follows that
\begin{equation} \label{3.8}
	\Delta^{b}(x-y) \geqslant \frac{{\alpha_2}}{H_2}.
\end{equation}
\item Let $n\in [n_{k},n_{k+1}]\ (k\geqslant 0)$. Note that $(H_1, H_2)\in K_{\alpha_1, \alpha_2}(y)$. Hence, $H_2\leqslant \frac{\alpha_2}{\Delta^b(y)}$. i.e.,
\[\Delta^b(y)\leqslant \frac{\alpha_2}{H_2}.\]
Thus, the prefix of the words in  the $b$-ary expansion of $y$ can not contain a word of $0$'s of length larger than $m_k'$ in $\theta_{n_{2k}-n_{2k-1}}\theta_{n_{2k}-n_{2k-1}+1}\cdots\theta_{m_{2k}}$.  Then by the definitions of 
$\mathcal{W}_{2k}$ and $\mathcal{W}_{2k+1}$, $\varepsilon_n$ can not be followed by a word, as a prefix of $\theta_1\theta_2\cdots$, of length $m'_k$. By $\eqref{2.13y}$, it holds that
\begin{equation}\label{35} 
	\begin{aligned}
		&\lVert b^{n}(x-y) \rVert \geqslant \frac{1}{b^{m'_k}} \geqslant
		\frac{1}{b^{n_{k}\big(\frac{{\alpha_2}}{H_2}-1\big)}} > \frac{1}{b^{n\big(\frac{{\alpha_2}}{H_2}-1\big)}}.
	\end{aligned}
\end{equation}
Hence, based on \eqref{35}, it follows that
\begin{equation}
\begin{aligned} \label{3.10}
\Delta^{b}(x-y) \leqslant \frac{{\alpha_2}}{H_2}.
\end{aligned}
\end{equation}
\end{itemize}
Combining \eqref{3.8} and \eqref {3.10} yields
\begin{equation} \label{lls2}
	\begin{aligned}
		\Delta^{b}(x-y)= \frac{{\alpha_2}}{H_2}.
	\end{aligned}
\end{equation}
\end{itemize}
Therefore, by \eqref{lls1} and \eqref{lls2},
\begin{equation} \label{final}
E\subset E_{L_{{\alpha_1}}^b,L_{\alpha_2}^{b,y}}(H_1,H_2).
\end{equation}

\subsection{A mass distribution and estimation of the lower local pointwise  dimensions}\label{mass-local-dim}

In this subsection, we will first give a mass distribution $\mu$ on the subset $E$. Then, we estimate the lower local pointwise dimensions of the measure $\mu$. 


\smallskip
Let us give a mass distribution $\mu$ on the Cantor type set $E$. To this end, by Kolmogorov's extension theorem (see for example, \cite[Theorem 2.4.4]{tao2011introduction}), it suffices to define the mass on each cylinder. Let $x \in E$, and let $I_n(x)$ be the cylinder of order $n$ containing $x$. Let $\varepsilon= \varepsilon_1\varepsilon_2\varepsilon_3\cdots$ be the $b$-ary expansion of $x$. Denote by $u_n(x)$ the number of "free positions" $i$ between $1$ and $n$, at which  $\varepsilon_i$ can be chosen in $\mathcal{A}$ freely, i.e., $\varepsilon_i$ is not fixed by the definition of $x \in E$. Then we define
\begin{equation} \label{3.16}
	\mu\big(I_{n}(x)\big)=b^{-u_{n}(x)}.
\end{equation}

In the following, the lower local pointwise dimension $\underline{d}_{\mu}(x)$ as defined in \eqref{lpd} of $\mu$ at a point $x\in E$ will be estimated.  First, by \eqref{3.16}, it holds that for any $x\in E$,
\begin{equation} \label{lld}\nonumber
	\begin{aligned}
	\underline{d}_{\mu}(x)=	\mathop{\liminf}\limits_{n\rightarrow \infty}\frac{\log\mu(I_{n}(x))}{\log \lvert I_{n}(x)\rvert}
	=\mathop{\liminf}\limits_{n\rightarrow \infty}\frac{\log b^{-u_n(x)}}{\log b
			^{-n}}
		=\mathop{\liminf}\limits_{n\rightarrow \infty}\frac{ u_{n}(x)}{{n}}.
	\end{aligned}
\end{equation}
To further estimate $u_{n}(x)$, we distinguish the following cases.
\begin{itemize}
\item If $n_{2k} \leqslant n\leqslant n_{2k}+m_{2k}+2$, then by the construction of $\mathcal{W}_{2k+1}$,
\begin{equation} \nonumber
	u_{n}(x)=u_{n_{2k}+m_{2k}}(x).
\end{equation}
Hence,
\begin{equation} \nonumber
\begin{aligned}
\frac{u_{n}(x)}{n}=\frac{u_{n_{2k}+m_{2k}}(x)}{n} \geqslant \frac{u_{n_{2k}+m_{2k}}(x)}{n_{2k}+m_{2k}+2}.
\end{aligned}
\end{equation}
\item If $n_{2k}+m_{2k}+3\leqslant n < n_{2k}+m_{2k}+m^*_{2k}$, then by the construction of $\mathcal{W}_{2k+1}$,
\begin{equation} \nonumber
	\begin{aligned}
	u_{n}(x)=u_{n_{2k}+m_{2k}}(x)+ n - n_{2k}- m_{2k}-2.\\
	\end{aligned}
\end{equation}
Note that $n-n_{2k}- m_{2k}-2>0$. Combing the fact that
\begin{equation} \label{fact}
\frac{x+a}{y+a}\geqslant\frac{x}{y},\quad \forall a > 0,\ x\leqslant y,
\end{equation}
it follow that
\begin{equation} \nonumber
	\begin{aligned}
		\frac{\log\mu(I_n(x))}{\log \lvert I_n(x)\rvert}=\frac{u_{n}(x)}{n} =\frac{u_{n_{2k}+m_{2k}}(x)+ n - n_{2k}- m_{2k}-2}{n_{2k}+m_{2k}+2+n-n_{2k}- m_{2k}-2}\geqslant\frac{u_{n_{2k}+m_{2k}}(x)}{n_{2k}+m_{2k}+2}.
	\end{aligned}
\end{equation}
\item If $n_{2k}+m_{2k}+m^*_{2k}\leqslant n < n_{2k+1}$, the same discussions as in the above two cases always apply.
Hence, for $n_{2k}\leqslant n< n_{2k+1},$ it can be concluded that
\begin{equation} \label{3.32}
		\frac{\log\mu(I_n(x))}{\log \lvert I_n(x)\rvert}	\geqslant\frac{u_{n_{2k}+m_{2k}}(x)}{n_{2k}+m_{2k}+2}.
\end{equation}
\end{itemize}

Note that
\begin{equation}\nonumber
	u_{n_{2k}+m_{2k}}(x)=n_{2k}-\sum\limits_{i=0 }^{k-1}\big(m_{2i}+5\big)-\sum\limits_{i=0 }^{k-1} \big(m'_{2i+1}+6\big)-3\sum\limits_{i=0 }^{2k-1}r_i.
\end{equation}
On the one hand,
\begin{equation} \nonumber
	\begin{aligned}
		\sum\limits_{i=0 }^{2k-1}r_i =&\sum\limits_{i=0 }^{k-1} \left\lfloor \frac{n_{2k+1}-n_{2k}-m_{2k}-8}{m^*_{2k}}\right\rfloor + \sum\limits_{i=0 }^{k-1} \left\lfloor \frac{n_{2k+2}-n_{2k+1}-m'_{2k+1}-9}{m^*_{2k+1}}\right\rfloor
		\\ \leqslant&
		\sum\limits_{i=0 }^{2k-1}\frac{n_{i+1}-n_{i}-m^*_{i}}{m^*_{i}}
		\\ \leqslant &
		\sum\limits_{i=0 }^{2k-1}\frac{n_{i+1}}{n_{i}\cdot\left(\min\left\{\frac{{\alpha_1}}{H_1}, \frac{{\alpha_2}}{H_2}\right\}-1\right)}
		\\ \leqslant & 2k\cdot\frac{n_{2k}}{n_{2k-1}\cdot\left(\min\left\{\frac{{\alpha_1}}{H_1}, \frac{{\alpha_2}}{H_2}\right\}-1\right)},
	\end{aligned}
\end{equation}
and by \eqref{314}, it holds that
\begin{equation} \nonumber
	\frac{\sum\limits_{i=0 }^{2k-1}3r_i}{n_{2k}} \rightarrow 0,\quad (k\rightarrow \infty).
\end{equation}
On the other hand,
\begin{equation} \nonumber
	\begin{aligned}
		\sum\limits_{i=0 }^{k-1}\left(m_{2i}+5\right)+\sum\limits_{i=0 }^{k-1} \left(m'_{2i+1}+6\right)&=\sum\limits_{i=0 }^{k-1}\left(\left\lfloor n_{2i}\left(\frac{{\alpha_1}}{H_1}-1\right)\right\rfloor+5\right)+\sum\limits_{i=0 }^{k-1}\left (\left\lfloor n_{2i+1}\left(\frac{{\alpha_2}}{H_2}-1\right)\right\rfloor+6\right)
		\\& \leqslant 2k\cdot n_{2k-1}\cdot\left(\max\left\{\frac{{\alpha_1}}{H_1}, \frac{{\alpha_2}}{H_2}\right\}-1\right) + 11k,
	\end{aligned}
\end{equation}
and by \eqref{314}, it follows that
\begin{equation} \nonumber
	\frac{\sum\limits_{i=0 }^{k-1}\big(m_{2i}+5\big)+\sum\limits_{i=0 }^{k-1} \big(m'_{2i+1}+6\big)}{n_{2k}} \rightarrow 0,\quad (k\rightarrow \infty).
\end{equation}
Therefore,
\begin{equation} \label{444}
	\begin{aligned}
		\mathop{\lim}\limits_{k\rightarrow \infty}\frac{u_{n_{2k}+m_{2k}}(x)}{n_{2k}+m_{2k}+2}
		=&\mathop{\lim}\limits_{k\rightarrow \infty}\frac{n_{2k}-\sum\limits_{i=0 }^{k-1}\big(m_{2i}+5\big)-\sum\limits_{i=0 }^{k-1} \big(m'_{2i+1}+6\big)-3\sum\limits_{i=0 }^{2k-1}r_i}{n_{2k}+m_{2k}+2}
		\\=&\mathop{\lim}\limits_{k\rightarrow \infty}\frac{n_{2k}}{n_{2k}+m_{2k}+2}
			\\=&\mathop{\lim}\limits_{k\rightarrow \infty}\frac{n_{2k}}{n_{2k}+\left\lfloor n_{2k}\left(\frac{{\alpha_1}}{H_1}-1\right)\right\rfloor+2}
		\\= &\frac{H_1}{{\alpha_1}}.
	\end{aligned}
\end{equation}

By the same arguments as the case $n_{2k}\leqslant n< n_{2k+1},$ it can be concluded that for $n_{2k+1}\leqslant n< n_{2k+2},$
\begin{equation} \label{3.33}
	\frac{\log\mu(I_n(x))}{\log \lvert I_n(x)\rvert}	\geqslant\frac{u_{n_{2k+1}+m'_{2k+1}}(x)}{n_{2k+1}+m'_{2k+1}+3},
\end{equation}
and \begin{equation} \label{333}
	\begin{aligned}
		\mathop{\lim}\limits_{k\rightarrow \infty}\frac{u_{n_{2k+1}+m'_{2k+1}}(x)}{n_{2k+1}+m'_{2k+1}+3}
		=\mathop{\lim}\limits_{k\rightarrow \infty}\frac{n_{2k+1}}{n_{2k+1}+m'_{2k+1}+3}
		= \frac{H_2}{{\alpha_2}}.
	\end{aligned}
\end{equation}

Combining \eqref{3.32}, \eqref{444}, \eqref{3.33} and \eqref{333}, we have that for any $x\in E$,
\begin{equation} \label {3.34}
	\begin{aligned}
\underline{d}_{\mu}(x)&=	\mathop{\liminf}\limits_{n\rightarrow \infty}\frac{\log\mu(I_n(x))}{\log \lvert I_n(x)\rvert}\\
&\geqslant \min\left\{\mathop{\lim}\limits_{k\rightarrow \infty}\frac{u_{n_{2k}+m_{2k}}(x)}{n_{2k}+m_{2k}+2},  \ \mathop{\lim}\limits_{k\rightarrow \infty}\frac{u_{n_{2k+1}+m'_{2k+1}}(x)}{n_{2k+1}+m'_{2k+1}+3}\right\}\\&= \min \left\{\frac{H_1}{{\alpha_1}}, \frac{H_2}{{\alpha_2}} \right\}. 
	\end{aligned}
\end{equation}
In fact,  it has been proven that when $k$ is large enough, between $n_{2k}$ and $n_{2k+2}$, the sequence $\frac{\log\mu\left(I_n(x)\right)}{\log \lvert I_n(x)\rvert}$ has two local minima at positions $n_{2k}+m_{2k}+2$ and $n_{2k+1}+m'_{2k+1}+3$. However, the limits exist for both subsequences $\{n_{2k}+m_{2k}+2\}_{k\geqslant0}$ and $\{n_{2k+1}+m'_{2k+1}+3\}_{k\geqslant0}$. Hence, the lower limit of the sequence $\frac{\log\mu(I_n(x))}{\log \lvert I_n(x)\rvert}$ is the minimum of these two limits. The analysis will be illustrated in Figure \ref{fig:my_label}.

\begin{figure}[H]
\begin{overpic}[width=15cm,height=9.5cm]{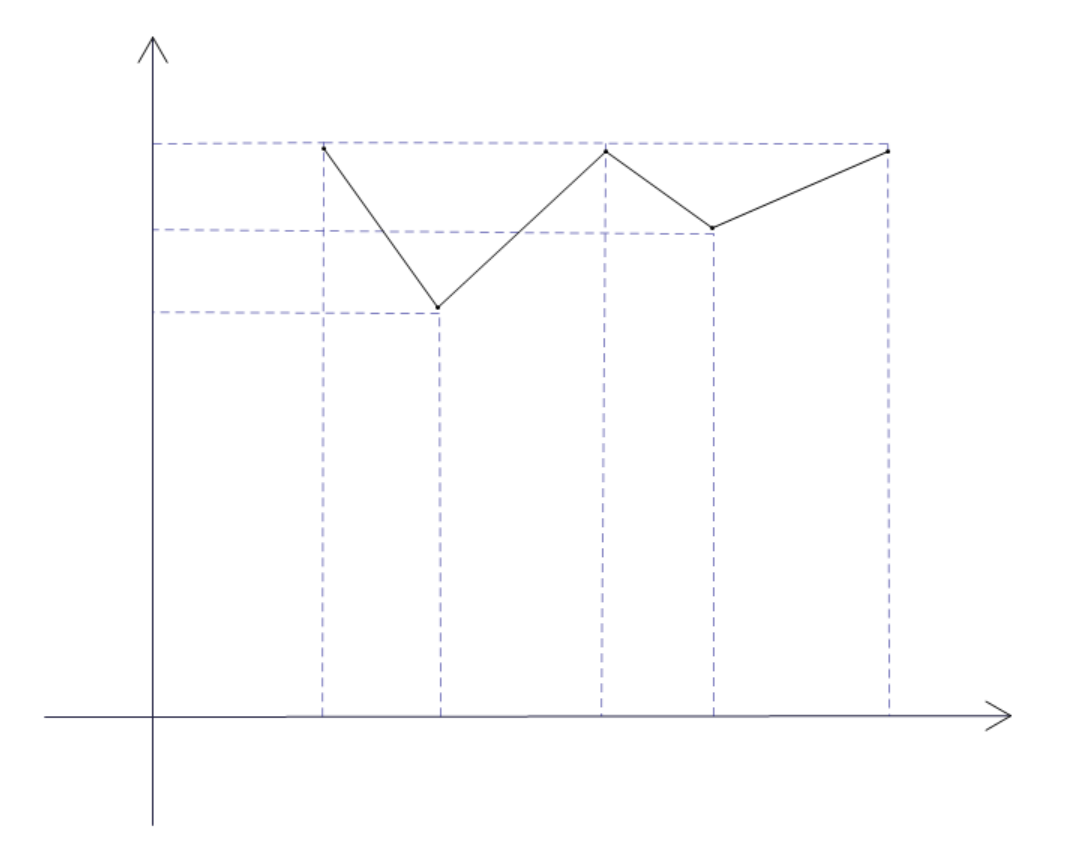}
	\put(2,58){\small \bfseries{$\frac{\log\mu\big(I_n(x)\big)}{\log \lvert I_{n}(x)\rvert}$}}
	\put(11,50){\small \bfseries{$1$}}
	\put(10,44){\small \bfseries{$\frac{H_2}{{\alpha_2}}$}}
	\put(10,39){\small \bfseries{$\frac{H_1}{{\alpha_1}}$}}
	\put(28,8){\small \bfseries{$n_{2k}$}}
	\put(37,8){\small \bfseries{$n_{2k}\!+\!m_{2k}\!+\!2$}}
	\put(53,8){\small \bfseries{$n_{2k+1}$}}
	\put(60,8){\small \bfseries{$n_{2k+1}\!+\!m'_{2k+1}\!+
 \!3$}}
	\put(79,8){\small \bfseries{$n_{2k+2}$}}
	\put(95,8){\small \bfseries{$n$}}
\end{overpic}
\vspace{-.7cm}
    \caption{The asymptotic behavior of the sequence $\left\{\frac{\log\mu(I_n(x))}{\log \lvert I_{n}(x)\rvert}\right\}_{n\geqslant 1}$.}
    \label{fig:my_label}
\end{figure}

\subsection{Proof of Theorem \ref{th}}\label{proof-main-th}

Now, we are ready to give the final proof of Theorem \ref{th} by applying Billingsley's lemma.

\begin{proof}[Proof of Theorem \ref{th}]

First note that the second part of the equality is immediate since the support of a bivariate spectrum trivially is included in the product of the supports of each function. 

As regards the first part of the equality, the upper bound is easy to obtain; indeed note that
\begin{equation} \nonumber
\begin{aligned}
E_{L_{{\alpha_1}}^b,L_{\alpha_2}^{b,y}}(H_1,H_2)\subset E_{L_{{\alpha_1}}^b}(H_1),\\
E_{L_{{\alpha_1}}^b,L_{\alpha_2}^{b,y}}(H_1,H_2)\subset E_{L_{\alpha_2}^{b,y}}(H_2).
\end{aligned}
\end{equation}
Then, by Proposition \ref{lemma3}, for any $(H_1, H_2) \in [0,{\alpha_1}]\times [0,{\alpha_2}]$, the bivariate multifractal spectrum of L\'evy functions $L_{{\alpha_1}}^b$ and $L_{\alpha_2}^{b,y}$ satisfies
\begin{equation}\label{upp}
\dim_H\left(E_{L_{{\alpha_1}}^b,L_{\alpha_2}^{b,y}}(H_1,H_2)\right)\leqslant \min\left\{\frac{H_1}{{\alpha_1}},\frac{H_2}{{\alpha_2}}\right\}.
\end{equation}

For the lower bound, when $(H_1, H_2)\in \Big{[}0,\frac{{\alpha_1}}{\Delta ^b(y)}\Big{]}\times\Big{[}0,\frac{{\alpha_2}}{\Delta ^b(y)}\Big{]}$, it follows by combining \eqref{final}, \eqref{3.34} and Lemma \ref{billings} that 
\begin{equation} \label{low}
\dim_H\left(E_{L_{{\alpha_1}}^b,L_{\alpha_2}^{b,y}}(H_1,H_2)\right)\geqslant\min\left\{\frac{H_1}{{\alpha_1}},\frac{H_2}{{\alpha_2}}\right\}.
\end{equation}
When $H_2=\frac{{\alpha_2}}{{\alpha_1}} H_1$, it follows that
\begin{equation}
    \nonumber
    E_{L_{{\alpha_1}}^b,L_{\alpha_2}^{b,y}}(H_1,H_2)\supset E_{L_{{\alpha_1}}^b}(H_1).
\end{equation}
Hence,
\begin{equation}\label{h2=alpha2/alpha1h1}
\dim_H\left(E_{L_{{\alpha_1}}^b,L_{\alpha_2}^{b,y}}(H_1,H_2)\right)\geqslant\dim_H\left(E_{L_{{\alpha_1}}^b}(H_1)\right)=\frac{H_1}{{\alpha_1}}.
\end{equation}

Finally, Theorem \ref{th} follows directly from \eqref{upp}, \eqref{low} and \eqref{h2=alpha2/alpha1h1}.
\end{proof}

 \section{Dichotomy of the size of \texorpdfstring{$E_{L_{{\alpha_1}}^b,L_{\alpha_2}^{b,y}}(H_1,H_2)$}{lg}}\label{section8}
 In this section, we prove Theorem \ref{greenpartspectrum} which shows that the sets  $E_{L_{{\alpha_1}}^b,L_{\alpha_2}^{b,y}}(H_1,H_2)$ are either empty or of maximal dimension.
 \begin{proof}[Proof of Theorem \ref{greenpartspectrum}]
 Let us consider the level set 
\begin{equation} \nonumber
\begin{aligned}
E_{L_{{\alpha_1}}^b,L_{\alpha_2}^{b,y}}(H_1,H_2)&=\left\{x \in \mathbb{R}:\Delta^{b}(x) = \frac{{\alpha_1}}{H_1}, \Delta^{b}(x-y)=\frac{{\alpha_2}}{H_2}\right\}, 
\end{aligned}
\end{equation}
where $(H_1,H_2)\in[0,{\alpha_1}]\times [0,{\alpha_2}] \mathbin{\Big\backslash} K_{\alpha_1,\alpha_2}(y)$.

 The $b$-ary expansion of $y$ is written as $\theta_1\theta_2\theta_3\cdots$. By revisiting the proof of Theorem \ref{th}, the lower bound is based on the construction of a Cantor type set which is defined by using the sequence $\{n_k\}$. That is, once there exists a sequence which may depend on $y$ and $(H_1,H_2)$, such that $\frac{kn_k}{n_{k+1}}\to 0,(k\to\infty),$ and
\begin{enumerate}[label=(\roman*)]
    \item \label{c1}$\forall l\in \left[n_{2k},n_{2k}+n_{2k}\left\lfloor\left(\frac{\alpha_1}{H_1}-1\right)\right\rfloor\right]$, the length of the block of $0$'s followed by $\theta_l$ is less than or equal to $\left\lfloor l\left(\frac{\alpha_2}{H_2}-1\right)\right\rfloor$.
    \item \label{c2}$\forall l\in \left[n_{2k+1},n_{2k+1}+n_{2k+1}\left\lfloor\left(\frac{\alpha_2}{H_2}-1\right)\right\rfloor\right]$, the length of the block of $0$'s followed by $\theta_l$ is less than or equal to $\left\lfloor l\left(\frac{\alpha_1}{H_1}-1\right)\right\rfloor$.
\end{enumerate}
There will be two cases.

\textbf{Case 1}: ${H_2}<\frac{\alpha_2H_1}{\alpha_1}$. 

Suppose there exist infinitely many $n$ such that for all ${l\in \left[n, \left\lfloor n\left(\frac{\alpha_2}{H_2}-1\right)\right\rfloor+n\right]}$, the length of the block of 0's in the $b$-ary expansion of $y$ followed by $\theta_l$ is less than or equal to $\left\lfloor l\left(\frac{\alpha_1}{H_1}-1\right)\right\rfloor$. Then $n_{2k}$ can be taken as an infinite subsequence of these $n$'s. Hence, condition \ref{c2} holds. On the other hand, by $H_2<\frac{\alpha_2H_1}{\alpha_1}$, condition \ref{c1} holds automatically. Thus, by the same proof as in Theorem \ref{th}, A Cantor type subset of $E_{L_{{\alpha_1}}^b,L_{\alpha_2}^{b,y}}(H_1,H_2)$ can be constructed, and it follows that
$
\mathcal{D}_{L_{{\alpha_1}}^b,L_{{\alpha_2}}^{b,y}}(H_1, H_2)=
    \min\left\{\frac{H_1}{{\alpha_1}}, \frac{H_2}{{\alpha_2}}\right\}.
$

 Otherwise, for any sufficiently large $n$, there exist $l_n \in \left[n, \left\lfloor n\left(\frac{\alpha_2}{H_2}-1\right)\right\rfloor+n\right]$, such that the length of the block of 0's followed by $\theta_{l_n}$ is larger than $\left\lfloor l_n\left(\frac{\alpha_1}{H_1}-1\right)\right\rfloor$. For any $x\in E_{L_{{\alpha_1}}^b,L_{\alpha_2}^{b,y}}(H_1,H_2)$, it holds that $\Delta^b(x-y)=\frac{\alpha_2}{H_2}$. Then, there exist infinitely many $n'$ such that the $b$-ary expansion of $x$ satisfies $$\varepsilon_{n'+1}\varepsilon_{n'+2}\cdots\varepsilon_{n'+\left\lfloor n'\left(\frac{\alpha_2}{H_2}-1\right)\right\rfloor}=\theta_{n'+1}\theta_{n'+2}\cdots\theta_{n'+\left\lfloor n'\left(\frac{\alpha_2}{H_2}-1\right)\right\rfloor}.$$ 
Thus, for infinitely many $n'$,
\begin{equation} \nonumber
   \lVert b^{n'}x \rVert <  \frac{1}{b^{ \left\lfloor n'\left(\frac{{\alpha_1}}{H_1}-1\right)\right\rfloor}},
\end{equation}
which implies that 
\begin{equation}
    \nonumber
    \Delta^b(x)> \frac{\alpha_1}{H_1}.
\end{equation}
This contradicts the definition of the level set $E_{L_{{\alpha_1}}^b,L_{\alpha_2}^{b,y}}(H_1,H_2)$ which asks that $\Delta ^{b}(x) = \frac{{\alpha_1}}{H_1}$. Therefore, such $x$ does not exist and the level set $E_{L_{{\alpha_1}}^b,L_{\alpha_2}^{b,y}}(H_1,H_2)$ is empty, leading to the conclusion that the value of the bivariate multifractal spectrum is  $-\infty$.

\bigskip

\textbf{Case 2}: ${H_2}>\frac{\alpha_2H_1}{\alpha_1}$. 

Suppose there exist infinitely many $n$ such that for all ${l\in \left[n, \left\lfloor n\left(\frac{\alpha_1}{H_1}-1\right)\right\rfloor+n\right]}$, the length of the block of 0's in the $b$-ary expansion of $y$ followed by $\theta_l$ is less than or equal to $\left\lfloor l\left(\frac{\alpha_2}{H_2}-1\right)\right\rfloor$. Then $n_{2k+1}$ can be taken as an infinite subsequence of these $n$'s. Hence, condition \ref{c1} is true. On the other hand, by $H_2>\frac{\alpha_2H_1}{\alpha_1}$, condition \ref{c2} holds automatically.  Thus, by the same proof as in Theorem \ref{th}, A Cantor type subset of $E_{L_{{\alpha_1}}^b,L_{\alpha_2}^{b,y}}(H_1,H_2)$ can be constructed, and it holds that
$
\mathcal{D}_{L_{{\alpha_1}}^b,L_{{\alpha_2}}^{b,y}}(H_1, H_2)=
    \min\left\{\frac{H_1}{{\alpha_1}}, \frac{H_2}{{\alpha_2}}\right\}.
$

 Otherwise, for any sufficiently large $n$, there exist $l_n \in \left[n, \left\lfloor n\left(\frac{\alpha_1}{H_1}-1\right)\right\rfloor+n\right]$, such that the length of the block of 0's followed by $\theta_{l_n}$ is larger than $\left\lfloor l_n\left(\frac{\alpha_2}{H_2}-1\right)\right\rfloor$. For any $x\in E_{L_{{\alpha_1}}^b,L_{\alpha_2}^{b,y}}(H_1,H_2)$, it follows that $\Delta^b(x)=\frac{\alpha_1}{H_1}$. Then, there exist infinitely many $n'$ such that the $b$-ary expansion of $x$ satisfies $$\varepsilon_{n'+1}\varepsilon_{n'+2}\cdots\varepsilon_{n'+\left\lfloor n'\left(\frac{\alpha_1}{H_1}-1\right)\right\rfloor}=0^{\left\lfloor n'\left(\frac{\alpha_1}{H_1}-1\right)\right\rfloor}.$$ 
Thus, for infinitely many $n'$,
\begin{equation} \nonumber
   \lVert b^{n'}(x-y) \rVert <  \frac{1}{b^{ \left\lfloor n'\left(\frac{{\alpha_2}}{H_2}-1\right)\right\rfloor}},
\end{equation}
which implies that 
\begin{equation}
    \nonumber
    \Delta^b(x-y)> \frac{\alpha_1}{H_1}.
\end{equation}
This contradicts the definition of the level set $E_{L_{{\alpha_1}}^b,L_{\alpha_2}^{b,y}}(H_1,H_2)$ which asks that $\Delta ^{b}(x-y) = \frac{{\alpha_2}}{H_2}$. Therefore, such $x$ does not exist and the level set $E_{L_{{\alpha_1}}^b,L_{\alpha_2}^{b,y}}(H_1,H_2)$ is empty, leading to the conclusion that the value of the bivariate multifractal spectrum is  $-\infty$.  
\end{proof}

\section{Proof of Propositions \ref{prop3} and \ref{prop1}} \label{section7}
\begin{proof}[Proof of Proposition \ref{prop3}]
Let $l_{i+1}=\lfloor l_{i}\eta\rfloor$ and suppose $y$ has $b$-ary expansion $\theta_1\theta_2\theta_3\cdots$. We consider
\begin{equation} \label{levelset}
\begin{aligned}
E_{L_{{\alpha_1}}^b,L_{\alpha_2}^{b,y}}(H_1,H_2)&=\left\{x \in \mathbb{R}:\Delta^{b}(x) = \frac{{\alpha_1}}{H_1}\right\}\cap\left\{x \in \mathbb{R}: \Delta^{b}(x-y)=\frac{{\alpha_2}}{H_2}\right\}, 
\end{aligned}
\end{equation}
where $(H_1,H_2)\in[0,{\alpha_1}]\times [0,{\alpha_2}] \mathbin{\Big\backslash} K_{\alpha_1,\alpha_2}(y).$

We will show that if \begin{equation}
    \label{condition1}
    \frac{{\alpha_2}{H_1}^2}{{\alpha_1}^2}< H_2<\frac{{\alpha_2}}{{\alpha_1}}H_1,\ \text{and}\  
    \frac{\alpha_1}{\eta}<H_1\leqslant \alpha_1,
\end{equation}
then the conditions \ref{c1} and \ref{c2} are satisfied. Hence, by following the same lines of the proof of Theorem \ref{th}, a Cantor type set of $E_{L_{{\alpha_1}}^b,L_{\alpha_2}^{b,y}}(H_1,H_2)$ will be constructed and it follows that $
\mathcal{D}_{L_{{\alpha_1}}^b,L_{{\alpha_2}}^{b,y}}(H_1, H_2)=
    \min\left\{\frac{H_1}{{\alpha_1}}, \frac{H_2}{{\alpha_2}}\right\}.
$


In fact, if \eqref{condition1} holds, since $\frac{{\alpha_2}{H_1}^2}{{\alpha_1}^2}< H_2$, then for all large $l_{i+1}$, it holds that
\begin{equation}
    \nonumber
    l_{i+1}\frac{H_1}{\alpha_1} < l_{i+1} \frac{\alpha_1H_2}{\alpha_2H_1}.
\end{equation}
and there exists at least an integer $n_i$ satisfying
\begin{equation}
    \label{n_kexist}
     l_{i+1}\frac{H_1}{\alpha_1} \leqslant n_i\leqslant l_{i+1} \frac{\alpha_1H_2}{\alpha_2H_1},
\end{equation}
i.e.,
\begin{equation}
\begin{aligned}
     \nonumber
    &l_{i+1}\leqslant n_i \frac{H_1}{{\alpha_1}},
    \\&n_i\leqslant l_{i+1}\frac{{\alpha_1}H_2}{{\alpha_2}H_1}.
\end{aligned}
\end{equation}
Thus,
\begin{equation}
    \begin{aligned}
        \label{l_i}
        &l_{i+1}-n_i\leqslant \left\lfloor n_i\left(\frac{{\alpha_1}}{H_1}-1\right)\right\rfloor,\\&
        n_i+\left\lfloor n_i\left(\frac{{\alpha_2}}{H_2}-1\right)\right\rfloor-l_{i+1} \leqslant \left\lfloor l_{i+1}\left(\frac{{\alpha_1}}{H_1}-1\right)\right\rfloor.
    \end{aligned}
\end{equation}
Therefore, by \eqref{l_i}, there exists a subsequence of $\{n_{i}\}$ which is still be denoted as  $\{n_{i}\}$, such that $\frac{in_i}{n_{i+1}}\to 0, (i\to\infty),$ and for all $p\in \left[n_{i}, \left\lfloor n_{i}\left(\frac{\alpha_2}{H_2}-1\right)\right\rfloor+n_{i}\right]$, the length of the block of 0's followed by $\theta_p$ in the $b$-ary expansion of $y$ is less than or equal to $\left\lfloor l\left(\frac{\alpha_1}{H_1}-1\right)\right\rfloor$. Take $\{n_i\}$ as the sequence $\{n_{2k}\}$ in the proof of Theorem \ref{th}. Then condition \ref{c1} holds. On the other hand, by the assumption $H_2<\frac{\alpha_2H_1}{\alpha_1}$, condition \ref{c2} holds trivially. 
Then by taking the same lines of the proof of Theorem \ref{th}, we can conclude that $
\mathcal{D}_{L_{{\alpha_1}}^b,L_{{\alpha_2}}^{b,y}}(H_1, H_2)=
    \min\left\{\frac{H_1}{{\alpha_1}}, \frac{H_2}{{\alpha_2}}\right\}.
$

Now, we will show that if
\begin{equation}
    \nonumber
    0\leqslant H_2\leqslant \frac{{\alpha_2}{H_1}^2}{{\alpha_1}^2}\  \text{and}\  \frac{\alpha_1}{\eta}<H_1\leqslant \alpha_1,
\end{equation}
then the condition \ref{c2} can never be satisfied. It then follows that $E_{L_{{\alpha_1}}^b,L_{{\alpha_2}}^{b,y}}(H_1, H_2)=\emptyset $. 

In fact, for any $i$ and $p\in[l_i, l_{i+1})$, if $l_i\leqslant p\leqslant \frac{l_{i+1}H_1}{\alpha_1},$ it holds that
\begin{equation} \label{condition3}
l_{i+1}-p \geqslant \left\lfloor p\left(\frac{{\alpha_1}}{H_1}-1\right)\right\rfloor.    
\end{equation}
If $p> \frac{l_{k+1}H_1}{\alpha_1},$ then by $H_2\leqslant \frac{{\alpha_2}{H_1}^2}{{\alpha_1}^2}$, it follows that
\begin{equation}
\nonumber
    p > \frac{l_{k+1}\alpha_1H_2}{\alpha_2H_1}.
\end{equation}
Hence
\begin{equation}     \label{condition4}
   p+\left\lfloor p\left(\frac{{\alpha_2}}{H_2}-1\right)\right\rfloor-l_{k+1} > \left\lfloor l_{k+1}\left(\frac{{\alpha_1}}{H_1}-1\right)\right\rfloor. 
\end{equation}
By \eqref{condition3} and \eqref{condition4}, for any sufficiently large $n$, there exist $p_n\in \left[n, \left\lfloor n\left(\frac{\alpha_2}{H_2}-1\right)\right\rfloor+n\right]$, such that the length of the block of 0's in the $b$-ary expansion of $y$ followed by $\theta_{p_n}$ is larger than $\left\lfloor p_n\left(\frac{\alpha_1}{H_1}-1\right)\right\rfloor$. Note that for any $x\in E_{L_{{\alpha_1}}^b,L_{\alpha_2}^{b,y}}(H_1,H_2)$, it holds that $\Delta^b(x-y)=\frac{\alpha_2}{H_2}$. Then there exist infinitely many $n'$ such that the $b$-ary expansion $\varepsilon_1\varepsilon_2\cdots$ of $x$ satisfies $$\varepsilon_{n'+1}\varepsilon_{n'+2}\cdots\varepsilon_{n'+\left\lfloor n'\left(\frac{\alpha_2}{H_2}-1\right)\right\rfloor}=\theta_{n'+1}\theta_{n'+2}\cdots\theta_{n'+\left\lfloor n'\left(\frac{\alpha_2}{H_2}-1\right)\right\rfloor}.$$  Hence, for infinitely many $n'$, it follows that
\begin{equation} \nonumber
   \lVert b^{n'}x \rVert <  \frac{1}{b^{ \left\lfloor n\left(\frac{{\alpha_1}}{H_1}-1\right)\right\rfloor}}.
\end{equation}
This  implies
\begin{equation}
    \nonumber
    \Delta^b(x)> \frac{\alpha_1}{H_1},
\end{equation}
which contradicts the definition of the level set asking that $\Delta ^{b}(x) = \frac{{\alpha_1}}{H_1}$. Therefore, the level set is empty.

Similarly, if $\frac{{\alpha_1}{H_2}^2}{{\alpha_2}^2}< H_1 <\frac{{\alpha_1}}{{\alpha_2}}H_2,$ and $\frac{\alpha_2}{\eta}<H_2\leqslant \alpha_2$, then
\begin{equation*}
    \mathcal{D}_{L_{{\alpha_1}}^b,L_{\alpha_2}^{b,y}}(H_1, H_2)=\min\left\{\frac{H_1}{{\alpha_1}}, \frac{H_2}{{\alpha_2}}\right\}.
\end{equation*}
If $0\leqslant H_1\leqslant\frac{{\alpha_1}{H_2}^2}{{\alpha_2}^2}$ and $\frac{\alpha_2}{\eta}<H_2\leqslant \alpha_2$, then $E_{L_{{\alpha_1}}^b,L_{\alpha_2}^{b,y}}(H_1,H_2)=\emptyset$.
 \end{proof}

\begin{proof}[Proof of Proposition \ref{prop1}]
We consider the level set $E_{L_{\alpha_1}^b,L_{\alpha_2}^{b,y}}(H_1,H_2)$ for any  $(H_1 , H_2)\in  [0,{\alpha_1}]\times \big[0,{\alpha_2}]$. Let $n_i=\lfloor l_i\eta\rfloor$, for any $i$. The length of the block of 0's followed by $\theta_{n_i}$ in the $b$-ary expansion of $y$ is 1. Then, a subsequence  can be extracted from $\{n_i\}$ containing infinitely many terms which is still denoted as $\{n_{i}\}$ satisfying
$$
    \frac{i\cdot n_i}{n_{i+1}}\to 0, (i\to\infty),
$$
and for each $p\in \left[n_i, n_i+\max \left\{ \left\lfloor {n_i}\left(\frac{{\alpha_1}}{H_1}-1\right)\right\rfloor,\left\lfloor {n_i}\left(\frac{{\alpha_2}}{H_2}-1\right)\right\rfloor\right\} \right]$,
$$
       \|b^{p}y\|> \frac{1}{b^{\min\left\{ \left\lfloor {p}\left(\frac{{\alpha_1}}{H_1}-1\right)\right\rfloor,\left\lfloor {p}\left(\frac{{\alpha_2}}{H_2}-1\right)\right\rfloor\right\}} }.
$$
Hence, conditions \ref{c1} and \ref{c2} are satisfied. Then, by taking the same proof as that of Theorem \ref{th}, we can construct a Cantor type subset of $E_{L_{{\alpha_1}}^b,L_{\alpha_2}^{b,y}}(H_1,H_2)$ and deduce that
$$
\mathcal{D}_{L_{{\alpha_1}}^b,L_{{\alpha_2}}^{b,y}}(H_1, H_2)=
    \min\left\{\frac{H_1}{{\alpha_1}}, \frac{H_2}{{\alpha_2}}\right\}.
$$
\end{proof}

\section{Conclusion}

Let us first discuss the choice of the H\"older exponent for the analysis of L\'evy functions. This is, in fact, motivated by the fact that if $ \alpha >0$ then  the function $L^b_\alpha$ is locally bounded, so that there is no need to have recourse to the regularity exponent fitted to a more general setting, such as $p$-exponents, which only require that $f$ is locally $p$-integrable (i.e., $f \in L^p_{loc}$).   However, the question can be raised if we allow to take negative values for $\alpha$. Indeed, in this case, the series  still converges in the sense of distributions (see \cite{jaffard2016p}, or, more simply, remark that a formal term by term integration  of sufficiently large order yields a normally convergent series), and one can wonder if a multifractal analysis of these generalized L\'evy functions can be performed using $p$-exponents. 

Let us  focus on the case $b=2$. Indeed, in this case, one can simply compute the coefficients of the L\'evy functions on the Haar basis, which allows to determine for which values of $p$ it belongs $L^p_{loc} $.
Let us first recall the definition of Haar basis. 
Let $ \varphi = 1_{[0,1]} $ 
and  let 
\begin{equation} \label{defhaa}  
  \psi (x) = \varphi  (2 x)- \varphi  (2 x-1), \quad \forall x \in \mathbb{R}.\end{equation} 
The  Haar basis on $[0,1]$  is  the orthonormal basis of $L^2( [0,1]
 ) $ composed of the functions
\begin{equation} \label{defbasond} 
\left\{ 
\begin{array}{ll}
   \varphi (x ) &        \\
 2^{j/2} \psi (2^jx
-k)  & \mbox{ for  }  j\geqslant0 \mbox{ and  }  k\in \{ 0, \cdots 2^j-1\}  . 
\end{array}
\right.
\end{equation} 
We denote by $c_{j,k}$ the coefficients of a function $f$ on this orthonormal basis. Let us compute the coefficients of the dyadic L\'evy functions on the Haar basis.
Let $$g_l (x) = \{ 2^l x \}, \quad \forall x \in \mathbb{R}.$$ If $j <l$, then the average of $g_l$ on dyadic intervals  of length at least $2^{-l +1}$ vanishes. It follows that  $< \psi_{j,k} | g_l >=0$.  If $j \geqslant l$, then  $g_l$ is an affine function of slope $2^l$ on the support of $\psi_{j,k} $, and an immediate computation yields 
\begin{equation}
    \nonumber
    < \psi_{j,k} | g_l> = -\frac{1}{4} 2^{l-j} . 
\end{equation}
It follows that the wavelet coefficients of $L_{\alpha}^2$ are 
\begin{equation}
    \nonumber
    c_{j,k}  =  -\frac{1}{4} 2^{-j/2} \sum_{l=0}^j 2^{(1-\alpha ) l } , 
\end{equation}
 for which, assuming $\alpha <1$, we can write 
 \begin{equation}
     \nonumber
      c_{j,k} = C_1 2^{-\alpha j }   + C_2 2^{- j },\quad \text{with}\quad C_1, C_2\ \text{being two constants}.
 \end{equation}
Recall that, for $p \in (1, \infty)$, a norm equivalent to the  $L^p$ norm of  a function $f$ is given by 
\begin{equation} \label{lpnorm}
    \parallel f \parallel^p_{L^p} = \left\lVert \left( \sum_j \sum_k | c_{j,k} |^2 2^j \chi_\lambda \right)^{1/2}  \right\lVert^p_{L^p},
\end{equation}
where $\lambda$ denotes the dyadic interval $\lambda = [ k 2^{-j},\; (k +1)2^{-j} ] $. 
For $f =L_{\alpha}^2$,  the double sum in the parenthesis in \eqref{lpnorm} boils down to 
\[ \sum_j 2^{-2\alpha j} ,\]
which diverges if $\alpha <0$.  We conclude that using $p$-exponents does not allow to deal with a larger range of values of $\alpha$; indeed, as soon as $\alpha <0 $, the L\'evy functions $L_{\alpha}^2$ are Schwartz distributions which do not belong to any $L^p$ space.  

\medskip 

The entire bivariate multifractal spectrum of L\'evy function $L_{\alpha_1}^b$ and translated L\'evy function $L_{\alpha_2}^{b,y}$ for almost every $y$ was determined, that is,
 \begin{equation}
    \begin{aligned}
        \nonumber
 \mathcal{D}_{L_{{\alpha_1}}^b,L_{{\alpha_2}}^{b,y}}(H_1, H_2)=
  \begin{cases}
    \min\left\{\frac{H_1}{{\alpha_1}}, \frac{H_2}{{\alpha_2}}\right\},       & \quad {\rm if}\ (H_1 , H_2)\in  [0,{\alpha_1}]\times \big[0,{\alpha_2}],\\
-\infty       & \quad {\rm else}.
  \end{cases}
    \end{aligned}
\end{equation}

For  other values of $y$, the spectrum in the region $K_{\alpha_1,\alpha_2}(y)$ was determined. For values outside $K_{\alpha_1,\alpha_2}(y)$, we were able to prove that the spectrum is either the minimum of the spectra of the two L\'evy functions respectively or $-\infty$, this leaves open the determination of the bivariate multifractal spectrum  on the region $ [0,{\alpha_1}]\times [0,{\alpha_2}] \setminus K_{ \alpha_1,\alpha_2 }(y)$. 

\smallskip

From the multifractal analysis of L\'evy functions, it appears that the spectrum is consistently either the empty set or the formula of large intersection as given by \eqref{minv}. This result allows to  conjecture that, under mild hypotheses,  
 the bivariate multifractal spectra of pairs of jump functions exhibit a distinct structure: the spectrum at the values given by the bivariate level sets is either $-\infty$ or corresponds to the minimum of the individual spectra of the two functions.  
 We will come back to this question in  \cite{Qian3}.

\medskip

Another important direction of research is to extend the present work to the bivariate multifractal analysis of L\'evy functions $L_{\alpha_1}^{b_1}$ and $L_{\alpha_2}^{b_2}$ with different bases $b_1$ and $b_2$ respectively, where $b_1$ and $b_2$ are coprime. We expect that this would yield  further insights into how varying the base parameters affects the multifractal properties of L\'evy functions. By investigating functions with different bases, we aim to develop a more generalized framework for multifractal spectrum, potentially uncovering new phenomena  that could be applicable to a broader class of functions. This work would not only deepen our theoretical understanding but also pave the way for practical applications in fields where multifractal analysis is utilized, and in particular for applications where possible shifts of (known or unknown quantities) may occur between the data under analysis.

Moreover, we are interested in Hecke's functions  \[ H_{\alpha}(x)=\frac{\{nx\}}{n^{\alpha}} . \]   A next interesting challenge is to  perform the multivariate multifractal analysis of Hecke's function $H_{\alpha_1}$ and the translated Hecke's function $H_{\alpha_2}^y$ which satisfies $H_{\alpha_2}^y=H_{\alpha_2}(\cdot-y)$ for a given translation parameter $y\in \mathbb{R}$, which means that we aim at obtaining the bivariate multifractal spectrum of Hecke's translated functions. 

 \section*{Acknowledgements}
Lingmin LIAO was partially supported by the Fundamental Research Funds for the Central Universities of China(1301/600460054).
\bibliographystyle{plain}
\bibliography{references}
\end{document}